\newtheorem{thm}{Theorem}
\newtheorem{cor}[thm]{Corollary}
\newtheorem{rem}[thm]{Remark}
\newtheorem{prop}[thm]{Proposition}
\newtheorem{prob}[thm]{Problem}
\newtheorem{defn}[thm]{Definition}
\newtheorem{conj}[thm]{Conjecture}
\newcommand{\ww}{\omega}
\newcommand{\A}{\cl A_{\omega}}
\newcommand{\BZ}{\beta \bb Z}
\newcommand{\BG}{\beta G}
\newcommand{\bb}[1]{\mathbb{#1}}
\newcommand{\cl}[1]{\mathcal{#1}}
\newcommand{\ff}[1]{\mathfrak{#1}}
\begin{document}

\title[Kadison-Singer]{A Dynamical Systems Approach to the Kadison-Singer Problem}

\author[V.~I.~Paulsen]{Vern I.~Paulsen}
\address{Department of Mathematics, University of Houston,
Houston, Texas 77204-3476, U.S.A.}
\email{vern@math.uh.edu}

\date{\today}

\thanks{This research was supported in part by NSF grant
  DMS-0600191. Portions of this research were conducted at the
  American Institute of Mathematics.}
\subjclass[2000]{Primary 46L15; Secondary 47L25}

\begin{abstract}  
In these notes we develop a link between the Kadison-Singer problem
and questions about certain dynamical systems. We conjecture that
whether or not a given state has a unique extension is related to
certain dynamical properties of the state. We prove that if any state
corresponding to a minimal idempotent point extends uniquely to the
von Neumann algebra of the group, then every state extends uniquely to
the von Neumann algebra of the group. We prove that if any state
arising in the Kadsion-Singer problem has a unique extension, then the
injective envelope of a C*-crossed product algebra associated with the
state necessarily contains
the full von Neumann algebra of the group. We prove that this latter
property holds for states arising from rare ultrafilters and
$\delta$-stable ultrafilters, independent, of the group action and
also for states corresponding to non-recurrent points in the corona of the group.
\end{abstract}

\maketitle


\section{Introduction}

Let $\cl H$ be a separable Hilbert space, and let $\cl D \subset B(\cl
H)$ be a discrete MASA.
The Kadison-Singer problem \cite{KS} asks whether or not  
every pure state on $\cl D$ has a unique extension to a state on
$B(\cl H)$.
Without loss of generality, one can assume that the Hilbert space is
$\ell^2(\bb N)$, where $\bb N$ denotes the natural numbers, with the canonical orthonormal basis, $\{ e_n \}_{n
  \in \bb N}$ and that the MASA is the subalgebra of operators that
are diagonal with respect to this basis.  

However, since any two
discrete MASA's on any two separable infinite dimensional Hilbert
spaces are conjugate, one may equally well assume that the Hilbert
space is $\ell^2(G)$, where $G$ is a countable, discrete group, with canonical orthonormal basis $\{ e_g
\}_{g \in G}$ and that the MASA is the set of operators that
are diagonal with respect to this basis.
Thus, $\cl D = \{ M_f: f \in \ell^{\infty}(G) \},$ where $M_f$ denotes the operator of multiplication by the function $f.$ We let $1$ denote the identity of $G$ and let $U_g= \lambda(g)$ denote the unitary operators given by the left regular representation, so that $U_g e_h = e_{gh}.$

The reason that we prefer this slight change of perspective, is that
we are interested in incorporating properties of the group action into
results on the Kadison-Singer problem.
Indeed, identifying the MASA, $\cl D \equiv \ell^{\infty}(G) \equiv
C(\BG)$, where $\BG$ denotes the Stone-Cech compactification of $G$, then pure states on $\cl D$ correspond to the homomorphisms
induced by evaluations at points in $\BG$.

Moreover, we have that for each $g \in G$, the map $g_1 \to gg_1$ extends uniquely to a
homeomorphism, $h_g: \BG \to \BG$ and this family of homeomorphisms satisfy, $h_{g_1} \circ h_{g_2}= h_{g_1g_2},$ that is, they induce an action of $G$ on the space $\BG$ and we set $h_g(\omega) = g \cdot \omega.$

In this paper we study the extent to which uniqueness or
non-uniqueness of extensions of the pure state induced by
a point $\ww \in \BG$ is related to the dynamical properties of the point.
In particular, we will be interested in the orbit $G \cdot \omega = \{
g \cdot \omega : g \in G \}$ of the point.

We will use the fact that the map $g \to g \cdot \ww$ is one-to-one,
which is a consequence of a theorem of Veech\cite{Ve}.
To see why this is so, note that this map being one-to-one is equivalent to requiring that the stabilizer subgroup, $G_{\ww}= \{g \in G: g \cdot \ww = \ww \}$ consist of the identity. When this is the case, we shall say that $\ww$ has {\bf trivial stabilizer.}

If every point in $\BG$ has trivial stabilizer, then $G$ acts
freely(and continuously) on $\BG$. Conversely, if there exists a
compact, Hausdorff space $X$ equipped with a continuous $G$-action
that is free, then choosing any $x \in X,$ the map $g \to g \cdot x$
extends to a continuous $G$-equivariant map $h:\BG \to X$ and for any
point $\ww \in \BG,$ the stabilizer of $\ww$ is contained in the
stabilzer of $h(\ww)$ and hence is trivial. Thus, every point in $\BG$
has trivial stabilizer if and only if $G$ can act freely and
continuously on some compact Hausdorff space. Veech's theorem\cite{Ve}
shows that, in fact, every locally compact group acts freely on a
compact, Hausdorff space. Thus, every point in $\BG$ has trivial stabilizer.

We begin by examining properties of any state extension of the pure
state given by evaluation at a point $\ww.$ To this end, given $M_f \in \cl D,$
we let $f \in C(\BG)$ denote the corresponding continuous function
on $\BG$ and let $s_{\ww}: \cl D \to \bb C$ denote the pure state
given by evaluation at $\ww$, that is, $s_{\ww}(f) = f(\ww).$

Let $s: B(\ell^2(G)) \to \bb C$ be any state
  extension of $s_{\ww}$ and let $\pi:B(\ell^2(G)) \to B(\cl H_s)$
  and $v_1 \in \cl H_s$ be the GNS representation of $s$, so that
  $s(X)= \langle \pi(X)v_1,v_1 \rangle.$ 
We set $v_g = \pi(U_g)v_1, g \in G$, let $\cl L_s \subseteq \cl
  H_s$ denote the closed linear span of the $v_g$'s and let
  $\phi_s:B(\ell^2(G)) \to B(\cl L_s)$ denote the completely
  positive map given by $\phi_s(X) = P_{\cl L_s} \pi(X) \mid_{\cl
  L_s}.$

Note that for any $M_f \in \cl D$ we have that $\langle \pi(M_f)v_g, v_g
\rangle= \langle \pi(U_g^{-1}M_fU_g)v_1,v_1 \rangle = f(g \cdot \ww).$ Hence,
these vectors are reducing for $\pi(\cl D)$ and orthonormal. Hence,
they are an orthonormal basis for $\cl L_s$ and the map $We_g= v_g$ is a Hilbert space isomorphism between $\cl L_s$ and $\ell^2(G).$  
Setting $\psi_s(X)= W^*\phi_s(X)W$ we obtain a completely positive map on $B(\ell^2(G)).$

Note that we have that $\langle \psi_s(X) e_h,e_g \rangle = \langle \pi(X) v_h, v_g \rangle = \langle \pi(U_{g^{-1}}XU_h) v_1,v_1 \rangle = s(U_{g^{-1}}XU_h).$
This shows that the correspondence $s \to \psi_s$ is one-to-one.

In particular, we have that $\psi_s(U_g)=U_g$ and
that for any $M_f \in \cl D, \psi_s(M_f) = \pi_{\ww}(M_f),$ where
$\pi_{\ww}: \cl D \to \cl D$ is the *-homomorphism given by
$$\pi_{\ww}(M_f)(g) = f(g \cdot \ww).$$

From these two equations, we see that the restriction of $\psi_s$ to
the C*-algebra generated by $\cl D$ and the set $\{ U_g: g \in G \}$,
is a *-homomorphism, which
we will denote by $\pi_{\ww} \times \lambda,$ satisfying,
$$\pi_{\ww}\times \lambda(\sum_n M_{f_n} U_{g_n}) = \psi_s( \sum_n M_{f_n} U_{g_n}) = \sum_n \pi_{\ww}(M_{f_n}) U_{g_n},$$
for every finite, or norm convergent sum.
This algebra is, in fact, *-isomorphic to the {\em reduced crossed-product
  C*-algebra}, $\cl D \times_r G$ as defined in say \cite{Pe},
although we do not need that fact here, we shall adopt that notation
for the algebra.

By Choi's theory of multiplicative domains\cite{Ch} of completely
positive maps, we have that for any $A_1,A_2 \in \cl D \times_r G$ and $X
\in B(\ell^2(G))$, we have
that $$\psi_s(A_1XA_2)= \pi_{\ww}\times \lambda(A_1) \psi_s(X)
\pi_{\ww}\times \lambda(A_2).$$

We now characterize the range of this correspondence.

\begin{thm} Let $\ww \in \BG.$ If $\psi: B(\ell^2(G)) \to
  B(\ell^2(G))$ is any completely positive extension of
  $\pi_{\ww}\times \lambda$
  and we set $s(X) = \langle \psi(X) e_1,e_1 \rangle ,$ then $s$ is a
  state extension of $s_{\ww}$ and $\psi = \psi_s .$ Consequently, the
  map, $s \to \psi_s$ is a one-to-one, onto affine map between the
  convex set of state extensions of $s_{\ww}$ and the convex set of
  completely positive extensions of $\pi_{\ww} \times \lambda.$
\end{thm}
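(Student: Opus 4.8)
The plan is to verify that the prescription $\psi \mapsto s$, with $s(X) = \langle \psi(X) e_1, e_1 \rangle$, is a well-defined two-sided inverse to $s \mapsto \psi_s$; affineness and injectivity then follow with essentially no extra work, and the ``consequently'' clause is a bookkeeping exercise on top of the two implications in the first sentence.

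First I would check that $s$ is a state extension of $s_{\ww}$. Since $I = M_1 \in \cl D \times_r G$ and $\psi$ extends $\pi_{\ww} \times \lambda$, we have $\psi(I) = (\pi_{\ww}\times\lambda)(I) = I$, so $\psi$ is unital; together with positivity of $\psi$ this makes $s$ a state. For $M_f \in \cl D \subseteq \cl D \times_r G$ we have $\psi(M_f) = \pi_{\ww}(M_f)$, hence $s(M_f) = \langle \pi_{\ww}(M_f) e_1, e_1 \rangle = \pi_{\ww}(M_f)(1) = f(1 \cdot \ww) = f(\ww) = s_{\ww}(f)$, so $s$ extends $s_{\ww}$.

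The main step is to show $\psi = \psi_s$, and this is exactly where Choi's multiplicative domain theorem is used. Because $\psi$ restricts to the $*$-homomorphism $\pi_{\ww}\times\lambda$ on the C*-subalgebra $\cl D \times_r G$, that subalgebra lies in the multiplicative domain of $\psi$; this is precisely the bimodule identity already recorded above the theorem, namely $\psi(A_1 X A_2) = (\pi_{\ww}\times\lambda)(A_1)\,\psi(X)\,(\pi_{\ww}\times\lambda)(A_2)$ for $A_1,A_2 \in \cl D \times_r G$ and $X \in B(\ell^2(G))$. Taking $A_1 = U_{g^{-1}}$, $A_2 = U_h$, and using $(\pi_{\ww}\times\lambda)(U_g) = U_g$ together with $U_h e_1 = e_h$ and $U_g e_1 = e_g$, I get
$$\langle \psi(X) e_h, e_g \rangle = \langle U_{g^{-1}} \psi(X) U_h e_1, e_1 \rangle = \langle \psi(U_{g^{-1}} X U_h) e_1, e_1 \rangle = s(U_{g^{-1}} X U_h).$$
But the computation preceding the theorem gives $\langle \psi_s(X) e_h, e_g \rangle = s(U_{g^{-1}} X U_h)$ as well. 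Since $\{e_g\}_{g \in G}$ is an orthonormal basis of $\ell^2(G)$, equality of all matrix entries forces $\psi(X) = \psi_s(X)$ for every $X$, i.e.\ $\psi = \psi_s$.

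Finally I would assemble the last sentence. The excerpt already establishes that each $\psi_s$ is a completely positive extension of $\pi_{\ww}\times\lambda$ and that $s \mapsto \psi_s$ is injective, with inverse $\psi \mapsto \langle \psi(\,\cdot\,) e_1, e_1 \rangle$ (the $g = h = 1$ case of the matrix-entry formula gives $\langle \psi_s(X) e_1, e_1 \rangle = s(X)$); the previous paragraph shows this candidate inverse is defined on \emph{all} of the convex set of completely positive extensions of $\pi_{\ww}\times\lambda$ and lands among state extensions of $s_{\ww}$, so $s \mapsto \psi_s$ is onto. Both maps are visibly affine in their defining formulas, so the correspondence is an affine bijection between the two convex sets. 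I do not expect a genuine obstacle here: the only point needing a moment's care is the legitimacy of invoking the multiplicative domain, since a priori $\psi$ is only assumed to restrict correctly on $\cl D \times_r G$ --- but as noted $\psi(I) = I$, so $\psi$ is globally unital and Choi's theorem (equivalently, the displayed bimodule identity from the excerpt) applies directly.
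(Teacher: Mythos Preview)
Your proof is correct and follows essentially the same approach as the paper's: both compute the matrix entries $\langle \psi(X) e_h, e_g \rangle$ via the Choi multiplicative-domain bimodule identity to identify them with $s(U_{g^{-1}} X U_h) = \langle \psi_s(X) e_h, e_g \rangle$, and then conclude $\psi = \psi_s$. You have simply filled in details the paper leaves implicit, in particular the verification that $s$ extends $s_{\ww}$ and the observation that Choi's theorem applies to the given $\psi$ (not just to $\psi_s$).
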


\begin{proof} It is clear that $s$ is a state extension of $s_{\ww}.$ Now given any $X \in B(\ell^2(G))$ and $g,h \in G,$ we have that $\langle \psi(X) e_h, e_g \rangle = \langle \psi(U_{g^{-1}}XU_h) e_1,e_1 \rangle = s(U_{g^{-1}}XU_h) = \langle \psi_s(X) e_h, e_g \rangle $ and, thus, $\psi(X) = \psi_s(X).$

Hence, the map $\psi \to s$ defines an inverse to the map $s \to
\psi_s$, so that these correspondences are one-to-one and
onto. Finally, it is clear that both of these correspondences preserve
convex combinations.
\end{proof}

\begin{cor} Let $\ww \in \BG.$ Then the following are equivalent:
\begin{itemize}
\item $s_{\ww}: \cl D \to \bb C$ has a unique extension to a
  state on $B(\ell^2(G)),$
\item $\pi_{\ww} \times \lambda: \cl D \times_r G \to B(\ell^2(G))$ has a unique
  extension to a completely positive map on $B(\ell^2(G)).$
\end{itemize}
\end{cor}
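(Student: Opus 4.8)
The plan is simply to transport the uniqueness statement across the affine bijection furnished by the preceding theorem. First I would note that both convex sets in question are nonempty: the set of state extensions of $s_{\ww}$ is nonempty by the Hahn--Banach theorem (a state on a unital C*-subalgebra extends to a state on the ambient C*-algebra), and correspondingly the set of completely positive extensions of $\pi_{\ww}\times\lambda$ is nonempty either by Arveson's extension theorem or, more cheaply, by applying the theorem to any one state extension. Thus in each case the phrase ``unique extension'' means precisely ``the relevant convex set is a singleton.''

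Next I would invoke the theorem directly: the correspondence $s\mapsto\psi_s$ is a bijection from the set of state extensions of $s_{\ww}$ onto the set of completely positive extensions of $\pi_{\ww}\times\lambda$, with inverse $\psi\mapsto\big(X\mapsto\langle\psi(X)e_1,e_1\rangle\big)$. A bijection carries a one-point set to a one-point set and conversely, so the first convex set is a singleton if and only if the second is. This establishes the stated equivalence; note that affinity of the correspondence is not actually needed for this particular deduction, although it is what will let one later match up extreme points and pure-state considerations.

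I do not anticipate any genuine obstacle here: all the real content already resides in the theorem, and the corollary is a formal consequence of having an honest bijection between the two solution sets. The only point deserving a word of care is to make sure ``completely positive extension of $\pi_{\ww}\times\lambda$'' is read exactly as in the theorem, namely a completely positive map on all of $B(\ell^2(G))$ whose restriction to $\cl D\times_r G$ is the $*$-homomorphism $\pi_{\ww}\times\lambda$, so that the two extension problems are literally identified by $s\leftrightarrow\psi_s$ and no side conditions are lost in passing between them.
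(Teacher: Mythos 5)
Your proposal is correct and is exactly the argument the paper intends: the corollary is stated without separate proof precisely because it is the formal consequence of the bijection $s \leftrightarrow \psi_s$ established in the preceding theorem, which carries singleton extension sets to singleton extension sets. Your added remarks on nonemptiness and on reading "completely positive extension" as in the theorem are harmless and fill in the same picture.
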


There is, of course, always one distinguished state extension of
$s_{\ww}$. If we let $E_0: B(\ell^2(\bb Z)) \to \cl D$ be the
canonical projection onto the diagonal, then the {\bf regular
  extension} of $s_{\ww}$ is given by
$$X \to s_{\ww}(E_0(X)).$$
Corresponding to this regular extension is a unique completely
positive map, $\psi_{\ww}:B(\ell^2(\bb Z)) \to B(\ell^2(\bb Z))$ which we
also call the {\bf regular completely positive extension.}
We wish to describe this map in some detail.

Every $X \in B(\ell^2(G))$ has a formal series, $X \sim \sum_{g \in
  G} M_{f_g} U_g$ where $M_{f_g}= E_0(X U_g^{-1}).$ To compute the $(g_i,g_j)$-th
  entry of $\psi_{\ww}(X),$ we note that
$$\psi_{\ww}(X)_{g_i,g_j}= \langle \psi_{\ww}(X)e_{g_j},e_{g_i} \rangle = \langle
\psi_{\ww}(U_{g_i}^{-1}X U_{g_j})e_1,e_1 \rangle = s_{\ww}(E_0(U_{g_i}^{-1}X U_{g_j})).$$
But, we have that $U_{g_i}^{-1}X U_{g_j} \sim \sum_{g \in G}
U_{g_i}^{-1}M_{f_g} U_{g_i}(U_{g_i^{-1}g g_j}),$
and hence, $E_0(U_{g_i}^{-1}X U_{g_j}) = U_{g_i}^{-1} M_{f_{g_ig_j^{-1}}} U_{g_i}.$
Thus, $\psi_{\ww}(X)_{g_i,g_jj} = f_{g_i g_j^{-1}}(g_i \cdot \ww).$

These observations lead to the following result.

\begin{thm} Let $\ww \in \BG$ and let $\psi_{\ww}$ be the regular
  completely positive extension corresponding to $s_{\ww}$. If $X \sim
  \sum_{g \in G} M_{f_g} U_g,$ then $\psi_{\ww}(X) \sim \sum_{g \in G}
  \pi_{\ww}(M_{f_g}) U_g$. Moreover, $s_{\ww}$ has a unique extension to a state
  on $B(\ell^2(G))$ if and only if $\psi_{\ww}$ is the unique
  completely positive map on $B(\ell^2(G))$ extending $\pi_{\ww}
  \times \lambda:
  \cl D \times_r G \to B(\ell^2(G)).$
\end{thm}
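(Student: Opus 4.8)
The plan is to obtain the first assertion by a direct comparison of matrix coefficients, and then to deduce the ``moreover'' statement by combining that assertion with the Corollary following the first Theorem.

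For the formal series identity, I would first recall that the assignment $Y \mapsto \sum_{g \in G} M_{h_g} U_g$ (with $M_{h_g} = E_0(YU_g^{-1})$) is simply a bookkeeping device for the matrix of $Y$: the same computation carried out just before the statement shows that $\left(\sum_{g \in G} M_{h_g}U_g\right)_{g_i,g_j} = h_{g_ig_j^{-1}}(g_i)$, and in particular the correspondence between operators and their formal series is one-to-one. Applying this with $M_{h_g} = \pi_{\ww}(M_{f_g})$, the $(g_i,g_j)$-entry of $\sum_{g \in G} \pi_{\ww}(M_{f_g})U_g$ is $\pi_{\ww}(M_{f_{g_ig_j^{-1}}})(g_i) = f_{g_ig_j^{-1}}(g_i \cdot \ww)$, which is exactly the value of $\psi_{\ww}(X)_{g_i,g_j}$ computed in the paragraph preceding the statement. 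Since the two operators have identical matrices, $\psi_{\ww}(X) \sim \sum_{g \in G} \pi_{\ww}(M_{f_g}) U_g$, which is the first assertion.

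For the equivalence, I would first observe that $\psi_{\ww}$ is itself a completely positive extension of $\pi_{\ww} \times \lambda$. This is already contained in the discussion preceding the first Theorem (where $\psi_{\ww} = \psi_s$ for $s$ the regular extension, and $\psi_s$ restricts to $\pi_{\ww} \times \lambda$ on $\cl D \times_r G$); alternatively it follows directly from the first part, since for $X = \sum_n M_{f_n} U_{g_n} \in \cl D \times_r G$, with the sum finite or norm convergent, the formal series of $\psi_{\ww}(X)$ is $\sum_n \pi_{\ww}(M_{f_n}) U_{g_n}$, and this series converges in norm to $\pi_{\ww} \times \lambda(X)$, whence $\psi_{\ww}(X) = \pi_{\ww} \times \lambda(X)$ by injectivity of the formal-series correspondence. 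Now I invoke the Corollary: $s_{\ww}$ has a unique state extension to $B(\ell^2(G))$ if and only if $\pi_{\ww} \times \lambda$ has a unique completely positive extension to $B(\ell^2(G))$. Since $\psi_{\ww}$ is one such extension, the statement ``$\pi_{\ww} \times \lambda$ has a unique completely positive extension'' is equivalent to ``$\psi_{\ww}$ is the unique completely positive extension of $\pi_{\ww} \times \lambda$'' (the forward direction because the unique extension must then be $\psi_{\ww}$, the reverse being trivial). Chaining the two equivalences gives the result.

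The only obstacle here is bookkeeping rather than mathematics: one must be careful that the formal-series map is injective (which it is, as it records every matrix entry) and that for elements of $\cl D \times_r G$ the defining sums converge in norm, so that their formal series genuinely represent them. With these points secured, the proof reduces entirely to the matrix-coefficient computation already performed in the text together with the Corollary, and no new analytic input is required.
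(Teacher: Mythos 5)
Your proposal is correct and follows essentially the same route as the paper: the theorem is stated there as an immediate consequence of the matrix-entry computation $\psi_{\ww}(X)_{g_i,g_j} = f_{g_ig_j^{-1}}(g_i \cdot \ww)$ carried out just before it, combined with the earlier Corollary identifying state extensions of $s_{\ww}$ with completely positive extensions of $\pi_{\ww}\times\lambda$. Your write-up merely makes explicit the bookkeeping (injectivity of the formal-series correspondence and the fact that $\psi_{\ww}$ restricts to $\pi_{\ww}\times\lambda$ on $\cl D \times_r G$) that the paper leaves implicit.
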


It may seem paradoxical to attempt to make progress on the
Kadison-Singer problem by replacing statements about uniqueness 
of the extension of a state to statements about uniqueness of the
extension of a completely positive map, but something is gained by
making the domain and range of the map the same space. We make this precise in the following results.

\begin{defn} Let $\ww \in \BG$, then we define the uniqueness set for $\ww$ to be the set,
$$\cl U(\ww) = \{ X \in B(\ell^2(G)): s(X)= s_{\ww}(E_0(X))  \forall s\},$$
where $s$ denotes an arbitrary state extension of $s_{\ww}.$
We also define the uniqueness set for $\pi_{\ww}$ to be the set,
$$\cl U(\pi_{\ww} \times \lambda) = \{ X \in B(\ell^2(G)): \psi(X) =
\psi_{\ww}(X) \forall \psi \},$$ where $\psi$ denotes an arbitrary
completely positive extension of $\pi_{\ww} \times \lambda.$
\end{defn}

\begin{prop}  Let $\ww \in \BG,$ then $$\cl U(\pi_{\ww} \times \lambda) = \bigcap_{g,h \in G} U_g \cl U(\ww) U_h.$$
\end{prop}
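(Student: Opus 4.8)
The plan is to reduce both memberships to the same entry-wise condition, using the identification (Theorem 1) of completely positive extensions of $\pi_{\ww}\times\lambda$ with state extensions of $s_{\ww}$.

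First I would record the two matrix-entry formulas already established in the excerpt. For any state extension $s$ of $s_{\ww}$ with associated completely positive map $\psi_s$, the computation preceding Theorem 1 gives, for all $X \in B(\ell^2(G))$ and all $g,h \in G$,
$$\langle \psi_s(X) e_h, e_g \rangle = s(U_{g^{-1}} X U_h),$$
and by Theorem 1 every completely positive extension $\psi$ of $\pi_{\ww}\times\lambda$ is of this form. Likewise, the matrix-entry computation preceding the second Theorem shows
$$\langle \psi_{\ww}(X) e_h, e_g \rangle = s_{\ww}\big(E_0(U_{g^{-1}} X U_h)\big).$$
Subtracting, $\psi_s(X) = \psi_{\ww}(X)$ holds if and only if $s(U_{g^{-1}} X U_h) = s_{\ww}\big(E_0(U_{g^{-1}} X U_h)\big)$ for every $g,h \in G$.

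Next I would unwind the definitions. By definition $X \in \cl U(\pi_{\ww}\times\lambda)$ precisely when $\psi(X)=\psi_{\ww}(X)$ for every completely positive extension $\psi$ of $\pi_{\ww}\times\lambda$; by the bijection of Theorem 1 this is the same as requiring, for every state extension $s$ of $s_{\ww}$ and every $g,h \in G$, that $s(U_{g^{-1}} X U_h) = s_{\ww}\big(E_0(U_{g^{-1}} X U_h)\big)$. Swapping the order of the quantifiers over $s$ and over $(g,h)$, this says exactly that $U_{g^{-1}} X U_h \in \cl U(\ww)$ for all $g,h \in G$. Finally, $U_{g^{-1}} X U_h \in \cl U(\ww)$ is equivalent to $X \in U_g\,\cl U(\ww)\,U_{h^{-1}}$, and as $(g,h)$ runs over $G\times G$ so does $(g,h^{-1})$; hence the set of $X$ satisfying all these conditions is $\bigcap_{g,h \in G} U_g\,\cl U(\ww)\,U_h$, which is the asserted equality.

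I do not expect a serious obstacle here: the argument is essentially bookkeeping. The one point that genuinely needs care is the legitimacy of replacing ``for every completely positive extension $\psi$'' by ``for every state extension $s$,'' which relies on Theorem 1, together with the observation that the two entry formulas above are valid on all of $B(\ell^2(G))$ rather than only on $\cl D \times_r G$; once these are in hand the quantifier interchange and the reindexing $h \mapsto h^{-1}$ finish the proof.
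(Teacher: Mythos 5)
Your proof is correct and follows essentially the same route as the paper: identify completely positive extensions with state extensions via Theorem 1, compare $\psi_s(X)$ and $\psi_{\ww}(X)$ entrywise using the formulas $\langle \psi_s(X)e_h,e_g\rangle = s(U_{g^{-1}}XU_h)$ and $\langle \psi_{\ww}(X)e_h,e_g\rangle = s_{\ww}(E_0(U_{g^{-1}}XU_h))$, and translate the resulting condition into $U_{g^{-1}}XU_h \in \cl U(\ww)$ for all $g,h$. Your explicit treatment of the final reindexing $h \mapsto h^{-1}$ is a detail the paper leaves implicit, but the argument is the same.
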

\begin{proof} Since every completely positive extension of $\pi_{\ww}
  \times \lambda$ is of the form $\psi_s$ for some state extension of $s_{\ww},$ we have that $X \in \ff U(\pi_{\ww}),$ if and only if $\psi_s(X) = \psi_{\ww}(X)$ for every extension $s$. But this is equivalent to $s(U_{g^{-1}}XU_h) = \langle \psi_s(X) e_h, e_g \rangle = \langle \psi_{\ww}(X) e_h, e_g \rangle= s(E_0(U_{g^{-1}}XU_h))$ for every $g,h \in G,$ which is equivalent to $U_{g^{-1}}XU_h \in \cl U(\ww)$ for every $g,h \in G,$ and the result follows.
\end{proof} 

Of course, if $s_{\ww}$ has a unique extension, then the sets above
are all equal to $B(\ell^2(G)),$ but if some $s_{\ww}$ fails to have a
unique extension, then it should be easier to show that $\cl
U(\pi_{\ww} \times \lambda) \ne B(\ell^2(G)),$ than to show that $\cl U(\ww) \ne B(\ell^2(G)).$

We now turn our attention to some results that relate uniqueness of extension to injective envelopes.

\begin{defn} We let $\cl A_{\ww}=
\pi_{\ww} \times \lambda(\cl D \times_r G)$ and we denote the von Neumann algebra generated by $\{U_g: g \in G \}$ by $VN(G).$
\end{defn}

Note that $VN(G)$ is always contained in the range of $\psi_{\ww}.$

Recall that a map $\phi$ is called a {\bf $\cl B$-bimodule map} for an algebra $\cl B$ if $\phi(b_1xb_2)= b_1 \phi(x)b_2,$ for every $b_1,b_2 \in \cl B.$

\begin{prop}\label{condition} Let $\ww \in \BG.$  If $s_{\ww}$ has a unique extension,
  then every completely positive map, $\phi:B(\ell^2(G)) \to
  B(\ell^2(G))$ that fixes $\cl A_{\ww}$ elementwise, also fixes the
  range of $\psi_{\ww}$ elementwise and is a $VN(G)$-bimodule map.
\end{prop}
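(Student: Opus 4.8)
The plan is to route everything through the composition $\phi\circ\psi_{\ww}$ and then invoke Choi's theory of multiplicative domains~\cite{Ch}.

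First I would record that $\phi$ is unital: since the identity operator is $M_1\in\cl D\subseteq\cl D\times_r G$, we have $I=\pi_{\ww}\times\lambda(I)\in\cl A_{\ww}$, and $\phi$ fixes $\cl A_{\ww}$ elementwise, so $\phi(I)=I$. The key step is the observation that $\phi\circ\psi_{\ww}\colon B(\ell^2(G))\to B(\ell^2(G))$ is again a completely positive extension of $\pi_{\ww}\times\lambda\colon\cl D\times_r G\to B(\ell^2(G))$: it is completely positive as a composition of completely positive maps, and for $A\in\cl D\times_r G$ we have $\psi_{\ww}(A)=\pi_{\ww}\times\lambda(A)$ (because $\psi_{\ww}$ extends $\pi_{\ww}\times\lambda$), which lies in $\cl A_{\ww}$ and is therefore fixed by $\phi$, so that $(\phi\circ\psi_{\ww})(A)=\pi_{\ww}\times\lambda(A)$. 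Since $s_{\ww}$ is assumed to have a unique state extension, the theorem above tells us that $\psi_{\ww}$ is the \emph{unique} completely positive map on $B(\ell^2(G))$ extending $\pi_{\ww}\times\lambda$; hence $\phi\circ\psi_{\ww}=\psi_{\ww}$, which is exactly the assertion that $\phi$ fixes the range of $\psi_{\ww}$ elementwise.

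For the bimodule statement I would then use that $VN(G)$ is contained in the range of $\psi_{\ww}$, as noted above; by the previous paragraph $\phi$ therefore restricts to the identity on $VN(G)$, which is in particular a unital $*$-homomorphism of $VN(G)$ into $B(\ell^2(G))$. By Choi's theorem this forces $VN(G)$ to lie in the multiplicative domain of $\phi$, and hence $\phi(u_1Xu_2)=\phi(u_1)\phi(X)\phi(u_2)=u_1\phi(X)u_2$ for all $u_1,u_2\in VN(G)$ and all $X\in B(\ell^2(G))$; that is, $\phi$ is a $VN(G)$-bimodule map.

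I do not anticipate a real obstacle here. The one subtlety worth flagging is that the hypothesis is stated for the $C^*$-algebra $\cl A_{\ww}$, which contains the unitaries $U_g$ but not, in general, the whole weakly closed algebra $VN(G)$; so the bimodule conclusion over $VN(G)$ is not obtained directly from the hypothesis but only after the intermediate step that $\phi$ fixes all of $\operatorname{range}(\psi_{\ww})$, which contains $VN(G)$. With that in hand the argument is a routine application of the converse direction of Choi's multiplicative-domain theorem to the von Neumann subalgebra $VN(G)$.
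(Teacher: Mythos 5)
Your proposal is correct and follows essentially the same route as the paper: the paper also observes that $\phi\circ\psi_{\ww}$ would be a completely positive extension of $\pi_{\ww}\times\lambda$, so uniqueness forces $\phi$ to fix the range of $\psi_{\ww}$, and then, since $VN(G)$ lies in that range, Choi's multiplicative domain theorem gives the $VN(G)$-bimodule property. Your write-up merely makes explicit the details the paper leaves implicit (unitality of $\phi$ and the verification that $\phi\circ\psi_{\ww}$ restricts correctly on $\cl D\times_r G$), which is fine.
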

\begin{proof} If $\phi$ does not fix the range, then $\phi \circ \psi_{\ww}$ would be another
  completely positive map extending $\pi_{\ww}.$ Thus, $\phi$ must fix the range of $\psi_{\ww}$ elementwise. But $VN(G)$ is a subset of the range of $\psi_{\ww}$ and so must be fixed. By Choi's\cite{Ch} theory of multiplicative domains this implies that $\phi$ is a $VN(G)$-bimodule map.
\end{proof}

\begin{rem} For many countable groups $G$, even when $G= \bb Z,$ there exist completely positive maps, $\phi: B(\ell^2(G))
  \to B(\ell^2(G))$ that fix  $C^*(G)$ elementwise and hence are $C^*(G)$-bimodule
  maps, but whose range does not contain $VN(G)$ and that are not
  $VN(G)$-bimodule maps. Examples of such completely positive maps are
  constructed in \cite{CEKPT}. However, any completely positive map $\phi:
  B(\ell^2(G)) \to B(\ell^2(G))$ that
  fixes $\cl D \times_r G$ elementwise is necessarily the identity map on all of
  $B(\ell^2(G)),$ since $\cl D \times_r G$ contains the compact
  operators. Thus, since $C^*(G) \subseteq \cl A_{\ww} \subseteq \cl D
  \times_r G,$ whether or not $s_{\ww}$ has a unique extension should be
  related to how small $\pi_{\ww}(\cl D)$ can be made.
\end{rem}

This last result can be interpreted in terms of injective
envelopes. Recall, that given a unital C*-subalgebra $\cl A \subseteq B(\cl H)$
and a completely positive idempotent map $\phi:B(\cl H) \to
B(\cl H)$ that fixes $\cl A$ elementwise and is minimal among all such
maps, then the range of $\phi, \cl R(\phi)$ is completely
isometrically isomorphic to the injective envelope of $\cl A, I(\cl
A).$ Such maps are called {\it minimal $\cl A$-projections.}
Thus, in particular, the ranges of any two minimal $\cl A$-projections are completely isometrically isomorphic via a map that fixes $\cl
A$ elementwise. See \cite{Pa} for further details.
For this reason the collection of subspaces that are ranges of minimal
$\cl A$-projections are called the {\it copies of the injective
  envelope of $\cl A$.} If we let $\cl F(\cl A)$ denote the set of elements of $B(\cl H)$ that
are elementwise fixed by every completely positive map that fixes
$\cl A,$ then it is clear that $\cl F(\cl A)$ is contained in every
copy of $I(\cl A).$ In \cite{Pa2}  it is shown that $\cl F(\cl A)$ is, in fact,
the intersection of all copies of $I(\cl A),$ but that is not
necessary for the following result.

\begin{cor} \label{condition2} Let $\ww \in \BG.$  If $s_{\ww}$ has a unique extension,
  then $VN(G) \subseteq \cl R(\psi_{\ww}) \subseteq \cl F(\A).$
\end{cor}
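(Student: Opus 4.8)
The plan is to read this off directly from Proposition~\ref{condition} together with the observation, made just after the definition of $\cl A_{\ww}$, that $VN(G) \subseteq \cl R(\psi_{\ww})$. The left-hand inclusion needs no hypothesis at all: by the formal-series description of $\psi_{\ww}$ (taking $X = U_g$, so that the coefficient $f_g$ is the constant function $1$ and $\pi_{\ww}(M_1) = I$) we get $\psi_{\ww}(U_g) = U_g$ for every $g \in G$, whence all of $VN(G)$ is fixed by $\psi_{\ww}$ and so lies in $\cl R(\psi_{\ww})$. So that half is immediate and unconditional.

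For the right-hand inclusion I would first record the bookkeeping point that $\cl A_{\ww} = \pi_{\ww}\times\lambda(\cl D \times_r G)$ is a unital C*-subalgebra of $B(\ell^2(G))$ — it is the image of a C*-algebra under a $*$-homomorphism and contains $U_1 = I$ — so that the injective-envelope picture of the preceding paragraph applies and $\cl F(\cl A_{\ww})$ is defined as the set of $X \in B(\ell^2(G))$ fixed elementwise by every completely positive map on $B(\ell^2(G))$ fixing $\cl A_{\ww}$ elementwise. Now assume $s_{\ww}$ has a unique extension. Proposition~\ref{condition} says precisely that every completely positive $\phi : B(\ell^2(G)) \to B(\ell^2(G))$ which fixes $\cl A_{\ww}$ elementwise also fixes $\cl R(\psi_{\ww})$ elementwise. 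Hence, for any $Y \in \cl R(\psi_{\ww})$ we have $\phi(Y) = Y$ for every such $\phi$, i.e. $Y \in \cl F(\cl A_{\ww}) = \cl F(\A)$; thus $\cl R(\psi_{\ww}) \subseteq \cl F(\A)$, and combining the two inclusions completes the argument.

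I do not anticipate a hard step here: the substantive content has already been isolated in Proposition~\ref{condition}, and the corollary is just the translation of the phrase ``fixes the range of $\psi_{\ww}$ elementwise'' into the language of $\cl F(\cl A_{\ww})$. If anything deserves a word of care it is only the remark that $\cl A_{\ww}$ is unital, so that $\cl F(\cl A_{\ww})$ and the minimal-$\cl A_{\ww}$-projection discussion make sense; this is immediate since $I = U_1 \in \cl A_{\ww}$.
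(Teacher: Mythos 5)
Your argument is exactly the one the paper intends: the corollary is stated without proof because it follows immediately from the remark that $VN(G)$ is always contained in $\cl R(\psi_{\ww})$ (which you verify via $\psi_{\ww}(U_g)=U_g$) together with Proposition~\ref{condition}, which says every completely positive map fixing $\A$ elementwise fixes $\cl R(\psi_{\ww})$ elementwise, so that $\cl R(\psi_{\ww}) \subseteq \cl F(\A)$ by the definition of $\cl F$. Your proposal is correct and matches the paper's route.
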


Thus, if the Kadison-Singer problem has an affirmative answer, then
necessarily $VN(G) \subseteq I(\cl A_{\ww}),$ and this inclusion is as
a subalgebra for
every $\ww.$ However, we know that, generally, $VN(G)$ is {\it not} a
subalgebra of $I(C^*(G)).$ 

\begin{prob} Let $D_g \in \pi_{\ww}(\cl D)$ be chosen such that $Y \sim \sum_{g \in G} D_g U_g$ is a bounded operator. Is $Y$ necessarily in the range of $\psi_{\ww}$? Can conditions on the orbit of $\ww$ be given that guarantee that this is the case?
\end{prob}


\section{Dynamics and Algebra}

In this section we begin to look at how dynamical properties of points
and their behavior with respect to a natural semigroup structure on
$\BG$ can be related to uniqueness of extension. For more on this
structure see \cite{HS}, but we recall a few basic definitions.

Given $\omega \in \BG$ we let $\rho_{\omega}: \BG \to \BG$ be the
unique continuous function satisfying, $\rho_{\omega}(g) =
 g \cdot \omega,$ for all $g \in G$ so that $\rho_{\omega}(\BG)$
is the closure of the orbit of $\omega.$ Since $\rho_{\omega} \circ
h_g (g_1) = \rho_{\omega}(gg_1) = gg_1 \cdot \omega = h_g \circ \rho_{\omega} (g_1),$ we
have that $\rho_{\omega} \circ h_g = h_g \circ \rho_{\omega},$ i.e.,
the map $\rho_{\omega}$ is equivariant for the action of $G$ on $\BG.$ This map also defines a
semigroup structure on $\BG$ by setting, $\alpha \cdot \omega \equiv
\rho_{\omega}(\alpha).$ We caution that in spite of the notation,
this operation is {\em not} abelian even when the underlying group is abelian(except for finite groups). 

However, it is associative and
continuous in the left variable and so it gives $\BG$ the structure of
a {\em compact right topological semigroup.}
We refer the reader to \cite{HS} for these and other basic facts about
this algebraic structure on $\BG$. One fact that we shall use is that
the corona, $G^*= \BG \setminus G$ is a two-sided ideal in $\BG.$

We now wish to relate dynamical properties of a point $\ww$, to the
structure of $\A$ and to the semigroup properties of $\ww.$

\begin{prop}
Let $\omega, \alpha \in \BG.$ Then $\rho_{\omega} \circ
\rho_{\alpha} = \rho_{\alpha \cdot \omega},$ $\pi_{\alpha} \circ
\pi_{\omega} = \pi_{\alpha \cdot \omega}$ and $\psi_{\alpha} \circ \psi_{\omega} = \psi_{\alpha \cdot \omega}.$
\end{prop}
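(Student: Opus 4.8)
The three identities are of the same flavor—each says that "composition of the $\omega$- and $\alpha$-maps equals the $(\alpha\cdot\omega)$-map"—so the strategy is to prove the first one directly from the defining universal property of $\rho_\bullet$, and then bootstrap the other two from it. First I would establish $\rho_\omega\circ\rho_\alpha=\rho_{\alpha\cdot\omega}$. Both sides are continuous maps $\BG\to\BG$ (using that $\rho_\alpha$ is continuous and $\rho_\omega$ is continuous), and by density of $G$ in $\BG$ it suffices to check they agree on $g\in G$. On the left, $\rho_\omega(\rho_\alpha(g))=\rho_\omega(g\cdot\alpha)=\rho_\omega(h_g(\alpha))=h_g(\rho_\omega(\alpha))=g\cdot(\alpha\cdot\omega)$, where the middle step is the equivariance of $\rho_\omega$ already recorded in the text. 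On the right, $\rho_{\alpha\cdot\omega}(g)=g\cdot(\alpha\cdot\omega)$ by definition. Since two continuous functions on $\BG$ agreeing on the dense subset $G$ are equal, the first identity follows.

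For the second identity, recall $\pi_\omega:\cl D\to\cl D$ is the $*$-homomorphism with $\pi_\omega(M_f)(g)=f(g\cdot\omega)$; identifying $\cl D\equiv C(\BG)$, this says $\pi_\omega(f)=f\circ\rho_\omega$ as continuous functions on $\BG$ (evaluating at $g\in G$ gives $f(\rho_\omega(g))=f(g\cdot\omega)$, and both sides are continuous in the point of $\BG$, so they agree everywhere). Hence $\pi_\alpha(\pi_\omega(f))=(f\circ\rho_\omega)\circ\rho_\alpha=f\circ(\rho_\omega\circ\rho_\alpha)=f\circ\rho_{\alpha\cdot\omega}=\pi_{\alpha\cdot\omega}(f)$, using the first identity. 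So the second identity is an immediate consequence of the first, modulo the (routine) observation that $\pi_\omega$ is "composition with $\rho_\omega$."

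For the third identity I would use Theorem 2 (the description of $\psi_\omega$ on formal series): if $X\sim\sum_{g\in G}M_{f_g}U_g$ then $\psi_\omega(X)\sim\sum_{g\in G}\pi_\omega(M_{f_g})U_g$, and moreover $\psi_\omega$ is the regular completely positive extension, i.e. $\psi_\omega(X)=$ the operator with $(g_i,g_j)$-entry $f_{g_ig_j^{-1}}(g_i\cdot\omega)$. Applying $\psi_\alpha$ to $\psi_\omega(X)$: the formal series of $\psi_\omega(X)$ has $g$-coefficient $\pi_\omega(M_{f_g})$, so $\psi_\alpha(\psi_\omega(X))\sim\sum_g\pi_\alpha(\pi_\omega(M_{f_g}))U_g=\sum_g\pi_{\alpha\cdot\omega}(M_{f_g})U_g\sim\psi_{\alpha\cdot\omega}(X)$, where the middle equality is the second identity. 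One must check that composing the "coefficientwise $\pi$-twist" maps really does act coefficientwise again and that $\psi_\alpha\circ\psi_\omega$, being a composition of unital completely positive maps fixing $\cl D\times_r G$-scalars appropriately, is still the regular extension for $\alpha\cdot\omega$—equivalently, verify the entry formula $(g_i,g_j)\mapsto f_{g_ig_j^{-1}}(g_i\cdot(\alpha\cdot\omega))$ directly by composing the two entry formulas.

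**Main obstacle.** The only genuinely delicate point is the third identity: $\psi_\omega$ is defined as a map on all of $B(\ell^2(G))$, not just on the formal series level, so I must be careful that the formal-series computation actually pins down $\psi_\alpha\circ\psi_\omega$ as an honest operator and that this operator is the regular extension $\psi_{\alpha\cdot\omega}$ rather than merely agreeing with it on a dense subalgebra. The clean way around this is to bypass formal series entirely and argue at the level of the entries: for arbitrary $X\in B(\ell^2(G))$, compute $\langle(\psi_\alpha\circ\psi_\omega)(X)e_{g_j},e_{g_i}\rangle$ using $\langle\psi_\alpha(Y)e_{g_j},e_{g_i}\rangle=s_\alpha(E_0(U_{g_i}^{-1}YU_{g_j}))$ with $Y=\psi_\omega(X)$, then use that $\psi_\omega$ is a $\cl D\times_r G$-bimodule map (from Choi's multiplicative domain theory, as invoked in the excerpt) to move the $U_{g_i}^{-1},U_{g_j}$ inside, and finally apply $E_0\circ\psi_\omega=\pi_\omega\circ E_0$ together with $s_\alpha\circ\pi_\omega=s_\alpha(\pi_\omega(\cdot))=s_{\alpha\cdot\omega}(\cdot)$ (which is exactly $\pi_\alpha\circ\pi_\omega=\pi_{\alpha\cdot\omega}$ evaluated at $1$, or equivalently $f\mapsto f(1\cdot(\alpha\cdot\omega))=f((\alpha\cdot\omega))$ matched against $f\circ\rho_\omega$ evaluated by $s_\alpha$). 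This reduces everything to the first identity plus bookkeeping, with no convergence issues.
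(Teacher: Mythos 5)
Your proposal is correct and follows essentially the same route as the paper: check $\rho_{\omega}\circ\rho_{\alpha}=\rho_{\alpha\cdot\omega}$ on the dense subset $G$ via equivariance, deduce the $\pi$ identity from $\pi_{\omega}(f)=f\circ\rho_{\omega}$, and handle $\psi$ by the same kind of computation. The paper dismisses the third identity with ``the proof is similar,'' and your entrywise argument (using $\psi_{\omega}(U_{g}^{-1}XU_{h})=U_{g}^{-1}\psi_{\omega}(X)U_{h}$, $E_{0}\circ\psi_{\omega}=\pi_{\omega}\circ E_{0}$, and $s_{\alpha}\circ\pi_{\omega}=s_{\alpha\cdot\omega}$) is a valid way of carrying out exactly that computation, correctly avoiding the formal-series convergence worry.
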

\begin{proof} We have that $\rho_{\omega} \circ \rho_{\alpha}(g)
  =\rho_{\omega}( h_g(\alpha))= h_g \circ
  \rho_{\omega}(\alpha) = h_g(\alpha \cdot \omega) = \rho_{\alpha
  \cdot \omega}(g),$ and hence, $\rho_{\omega} \circ \rho_{\alpha} =
  \rho_{\alpha \cdot \omega}.$

The second equality comes from the fact that after identifying $\cl D
= C(\BG),$ then $\pi_{\omega}(f) = f \circ \rho_{\omega},$ and
hence, $\pi_{\alpha} \circ \pi_{\omega}(f) = f \circ \rho_{\omega}
\circ \rho_{\alpha} = f \circ \rho_{\alpha \cdot \omega} =
\pi_{\alpha \cdot \omega}(f).$ The proof of the third identity is similar.
\end{proof}

\begin{defn}
A point $\omega \in \BG$ is called {\bf idempotent} if $\omega \cdot
\omega = \omega.$
\end{defn}

Non-zero idempotent points are known to exist \cite{HS}, since the
corona is a compact right continuous semigroup. As we shall see
below they play a special role in the Kadison-Singer problem.

\begin{prop} Let $\omega \in \BG.$ Then the following are equivalent:
\begin{itemize}
\item  $\omega$ is idempotent,
\item  $\rho_{\omega} \circ \rho_{\omega} = \rho_{\omega},$
\item  $\pi_{\omega}: \cl D \to \cl D$ is idempotent,
\item  $\pi_{\omega} \times \lambda: \cl D \times_r G \to \cl D \times_r G$ is idempotent,
\item $\psi_{\ww}$ is an idempotent completely positive map.
\end{itemize}
\end{prop}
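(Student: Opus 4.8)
The plan is to prove the cycle of implications $(1)\Rightarrow(5)\Rightarrow(4)\Rightarrow(3)\Rightarrow(2)\Rightarrow(1)$, each step being short given the preceding Proposition (which records $\rho_\omega\circ\rho_\omega=\rho_{\omega\cdot\omega}$, $\pi_\omega\circ\pi_\omega=\pi_{\omega\cdot\omega}$, and $\psi_\omega\circ\psi_\omega=\psi_{\omega\cdot\omega}$ upon setting $\alpha=\omega$), together with the facts already established in Section~1 that $\psi_\omega$ restricts on $\cl D\times_r G$ to the homomorphism $\pi_\omega\times\lambda$ and on $\cl D$ to $\pi_\omega$, and that $\pi_\omega\times\lambda$ (respectively $\pi_\omega$) carries $\cl D\times_r G$ (respectively $\cl D$) back into itself.

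For $(1)\Rightarrow(5)$ I would simply apply the preceding Proposition with $\alpha=\omega$ to get $\psi_\omega\circ\psi_\omega=\psi_{\omega\cdot\omega}=\psi_\omega$, which, since $\psi_\omega$ is completely positive, is exactly the assertion. For $(5)\Rightarrow(4)$ and $(4)\Rightarrow(3)$ I would use the permanence of the relevant subalgebras: since $\pi_\omega\times\lambda$ maps $\cl D\times_r G$ into itself and agrees there with $\psi_\omega$, one has $\psi_\omega^2|_{\cl D\times_r G}=(\pi_\omega\times\lambda)^2$, so $(5)$ forces $(\pi_\omega\times\lambda)^2=\pi_\omega\times\lambda$; likewise $\pi_\omega\times\lambda$ sends $\cl D=\cl D\cdot U_1$ into $\cl D$ via $\pi_\omega$, so restricting $(\pi_\omega\times\lambda)^2=\pi_\omega\times\lambda$ to $\cl D$ yields $\pi_\omega^2=\pi_\omega$. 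For $(3)\Rightarrow(2)$ I would use $\pi_\omega(f)=f\circ\rho_\omega$ for $f\in C(\BG)\cong\cl D$, so that idempotence of $\pi_\omega$ reads $f\circ(\rho_\omega\circ\rho_\omega)=f\circ\rho_\omega$ for every $f\in C(\BG)$; since $C(\BG)$ separates the points of the compact Hausdorff space $\BG$, this gives $\rho_\omega\circ\rho_\omega=\rho_\omega$. Finally, for $(2)\Rightarrow(1)$ I would evaluate $\rho_\omega\circ\rho_\omega=\rho_\omega$ at the group identity $1\in G\subseteq\BG$: since $\rho_\omega(1)=1\cdot\omega=\omega$ and, by the definition of the semigroup product, $\rho_\omega(\omega)=\omega\cdot\omega$, this gives $\omega\cdot\omega=\omega$.

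I do not expect any genuine obstacle here. The only points needing a moment of care are the bookkeeping in the two restriction steps — one must check that ``restrict then square'' agrees with ``square then restrict'', which is precisely where the invariance of $\cl D\times_r G$ under $\pi_\omega\times\lambda$ and of $\cl D$ under $\pi_\omega$ is used — and the invocation that $C(\BG)$ separates the points of $\BG$ in the passage $(3)\Rightarrow(2)$. As an alternative to $(3)\Rightarrow(2)\Rightarrow(1)$, one could deduce $(1)$ from $(3)$ in one stroke by evaluating $\pi_\omega^2(f)=\pi_\omega(f)$ at $1\in G$, which gives $f(\omega\cdot\omega)=f(\omega)$ for all $f\in\cl D$ and hence $\omega\cdot\omega=\omega$ by the same separation fact; either route closes the loop.
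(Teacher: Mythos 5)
Your proof is correct, and it is essentially the intended argument: the paper states this Proposition without proof, evidently regarding it as immediate from the preceding Proposition (take $\alpha=\omega$ in $\rho_\omega\circ\rho_\alpha=\rho_{\alpha\cdot\omega}$, $\pi_\alpha\circ\pi_\omega=\pi_{\alpha\cdot\omega}$, $\psi_\alpha\circ\psi_\omega=\psi_{\alpha\cdot\omega}$) together with the restriction relations among $\psi_\omega$, $\pi_\omega\times\lambda$, $\pi_\omega$, $\rho_\omega$ from Section~1, which is exactly what your cycle of implications carries out. Your care about the invariance of $\cl D\times_r G$ and $\cl D$ under the respective maps and the point-separation step for $(3)\Rightarrow(2)$ is appropriate and correct.
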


\begin{thm} Let $\omega \in \BG$ be an idempotent. If $s_{\ww}$ has a unique state extension, then $\cl
  R(\psi_{\ww})$ is completely isometrically isomorphic to the injective envelope of $\cl
  A_{\omega}$. Moreover, $\cl R(\psi_{\ww})$ is the unique copy of
  the injective envelope inside $B(\ell^2(G))$ and $\psi_{\ww}$ is the
  unique projection onto it. Thus, the identity map on $\cl
  A_{\omega}$ extends uniquely to an embedding of $I(\cl A_{\omega})$
  into $B(\ell^2(G)).$
\end{thm}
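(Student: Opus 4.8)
The plan is to assemble the claim from the machinery already in place. First, since $\omega$ is idempotent, the preceding proposition gives that $\psi_{\ww}$ is an idempotent completely positive map, and by Theorem on the regular extension its range contains $\cl A_{\omega} = \pi_{\ww}\times\lambda(\cl D\times_r G)$; indeed $\psi_{\ww}$ fixes $\cl A_{\omega}$ elementwise because $\psi_{\ww}$ restricted to $\cl D\times_r G$ is the $*$-homomorphism $\pi_{\ww}\times\lambda$, and idempotence of $\pi_{\ww}$ forces $\pi_{\ww}\times\lambda$ to fix its own range. So $\psi_{\ww}$ is a completely positive idempotent on $B(\ell^2(G))$ that fixes $\cl A_{\omega}$ elementwise, hence an $\cl A_{\omega}$-projection (not yet known to be minimal). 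I would then invoke Proposition~\ref{condition} and Corollary~\ref{condition2}: the uniqueness hypothesis gives $\cl R(\psi_{\ww})\subseteq\cl F(\cl A_{\omega})$, i.e. the range of $\psi_{\ww}$ is contained in \emph{every} copy of the injective envelope of $\cl A_{\omega}$.

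The key step is to upgrade this to: $\psi_{\ww}$ is a \emph{minimal} $\cl A_{\omega}$-projection. Suppose $\phi_0$ is a minimal $\cl A_{\omega}$-projection with $\cl R(\phi_0)\subseteq\cl R(\psi_{\ww})$ (standard rigidity arguments for injective envelopes let one assume such a $\phi_0$ exists below any given $\cl A_{\omega}$-projection, by composing with $\psi_{\ww}$ if necessary). Then $\cl R(\phi_0)$ is a copy of $I(\cl A_{\omega})$, so $\cl F(\cl A_{\omega})\subseteq\cl R(\phi_0)$. Combining with $\cl R(\psi_{\ww})\subseteq\cl F(\cl A_{\omega})$ from the previous paragraph yields $\cl R(\psi_{\ww})\subseteq\cl R(\phi_0)\subseteq\cl R(\psi_{\ww})$, hence $\cl R(\psi_{\ww})=\cl R(\phi_0)$. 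Since $\phi_0$ is a minimal $\cl A_{\omega}$-projection, so is $\psi_{\ww}$, and therefore $\cl R(\psi_{\ww})$ is completely isometrically isomorphic to $I(\cl A_{\omega})$. Uniqueness of the copy: any copy $\cl R(\phi)$ of $I(\cl A_{\omega})$ contains $\cl F(\cl A_{\omega})\supseteq\cl R(\psi_{\ww})$, and since $\cl R(\psi_{\ww})$ is itself already a full copy of the injective envelope, the inclusion $\cl R(\psi_{\ww})\subseteq\cl R(\phi)$ of copies must be an equality (a copy of $I(\cl A_{\omega})$ cannot properly contain another copy, as both are minimal). That $\psi_{\ww}$ is the unique projection onto it follows because a minimal $\cl A_{\omega}$-projection restricted to another copy of the injective envelope is the identity, so any projection onto $\cl R(\psi_{\ww})$ fixing $\cl A_{\omega}$ agrees with $\psi_{\ww}$ on $\cl R(\psi_{\ww})$ and on the complement is determined by rigidity.

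Finally, the embedding statement: identifying $\cl R(\psi_{\ww})\cong I(\cl A_{\omega})$, the inclusion $\cl R(\psi_{\ww})\hookrightarrow B(\ell^2(G))$ is a unital complete isometry extending $\mathrm{id}_{\cl A_{\omega}}$. If $j:I(\cl A_{\omega})\to B(\ell^2(G))$ were another unital completely positive extension of the identity on $\cl A_{\omega}$, then $j$ composed with the isomorphism $I(\cl A_{\omega})\cong\cl R(\psi_{\ww})$, followed by $\psi_{\ww}$, would be an $\cl A_{\omega}$-projection whose range lies in $\cl R(\psi_{\ww})$; by the minimality/rigidity just established it must coincide with the original inclusion, giving uniqueness of the embedding.

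I expect the main obstacle to be the \emph{minimality of $\psi_{\ww}$ as an $\cl A_{\omega}$-projection}, i.e., ruling out that some completely positive idempotent fixing $\cl A_{\omega}$ has strictly smaller range than $\cl R(\psi_{\ww})$. The argument above handles this by sandwiching $\cl R(\psi_{\ww})$ between $\cl F(\cl A_{\omega})$ and a copy of the injective envelope, but this genuinely uses the characterization of $\cl F(\cl A_{\omega})$ as the intersection of all copies of $I(\cl A_{\omega})$ from \cite{Pa2} (or at least the weaker containment $\cl F(\cl A_{\omega})\subseteq\cl R(\phi_0)$ for a minimal $\phi_0$ below $\psi_{\ww}$), together with the uniqueness hypothesis feeding Corollary~\ref{condition2}; without the idempotence of $\omega$ one would not even know $\psi_{\ww}$ is itself a projection, so that hypothesis is essential at exactly this point.
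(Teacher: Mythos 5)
Your main argument is essentially the paper's own: the proof in the paper consists precisely of the observations that, under the unique-extension hypothesis, Corollary~\ref{condition2} puts $\cl R(\psi_{\ww})$ inside $\cl F(\A)$ and hence inside every copy of $I(\A)$, and that idempotence of $\ww$ makes $\psi_{\ww}$ itself a completely positive projection fixing $\A$, so that its range must be the (unique) copy. Your added details --- producing a minimal $\A$-projection $\phi_0$ with range inside $\cl R(\psi_{\ww})$ (by running Hamana's construction inside the injective operator system $\cl R(\psi_{\ww})$ and composing with $\psi_{\ww}$), the sandwich $\cl R(\psi_{\ww})\subseteq\cl R(\phi_0)\subseteq\cl R(\psi_{\ww})$, and the rigidity argument that one copy cannot properly contain another --- are exactly what the paper leaves implicit, and they are correct.

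The step I would push back on is your justification of the clause ``$\psi_{\ww}$ is the unique projection onto $\cl R(\psi_{\ww})$.'' You argue that any projection onto $\cl R(\psi_{\ww})$ agrees with $\psi_{\ww}$ on $\cl R(\psi_{\ww})$ and ``on the complement is determined by rigidity.'' Rigidity only controls completely positive maps of the envelope into itself that fix $\A$; it says nothing about the values of a projection off its range, and in general a completely positive projection onto a copy of an injective envelope is \emph{not} determined by its range: for $\cl A=\bb C I$ the unique copy of $I(\cl A)$ in $B(\ell^2(G))$ is $\bb C I$, yet every state $s$ gives a distinct ucp projection $X\mapsto s(X)I$ onto it. So uniqueness of the projection cannot come from envelope generalities; it must use the unique-extension hypothesis itself, for instance by showing that any ucp projection onto $\cl R(\psi_{\ww})$ restricts to $\pi_{\ww}\times\lambda$ on $\cl D\times_r G$ and then invoking the first theorem of the paper --- an argument which neither your write-up nor, to be fair, the paper's own two-sentence proof supplies. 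A related remark on the final clause: the uniqueness of the embedding of $I(\A)$ is better proved without appealing to uniqueness of the projection, by noting that the image of any ucp complete isometry $j:I(\A)\to B(\ell^2(G))$ fixing $\A$ is itself a copy (extend $j^{-1}$ to a ucp map $r$ by injectivity and check $j\circ r$ is a minimal $\A$-projection), hence equals $\cl R(\psi_{\ww})$ by the uniqueness of the copy, and then two such embeddings differ by an $\A$-fixing ucp map of $I(\A)$, which is the identity by rigidity.
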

\begin{proof} We have seen earlier that for any $\omega \in \BG$ the range of $\psi_{\ww}$ is contained in any copy of the injective envelope. Thus, when $\ww$ is idempotent, since $\psi_{\ww}$ is already a completely positive projection, it's range must be a minimal completely positive projection and it's range must be the unique copy of the injective envelope.
\end{proof}

\begin{conj} We conjecture that if $\ww$ is idempotent, then $s_{\ww}$
  posesses non-unique extensions. That is, the Kadison-Singer
  conjecture is false and idempotent points provide
  counterexamples. In fact, we believe that idempotent points fail to
  satisfy the condition of Corollary~\ref{condition2}, $VN(G) \subseteq \cl F(\cl A_{\ww}).$
\end{conj}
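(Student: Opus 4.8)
We outline our intended approach to the conjecture. By Corollary~\ref{condition2} it is enough, for a fixed idempotent $\ww \in G^*$, to produce a single completely positive map $\phi : B(\ell^2(G)) \to B(\ell^2(G))$ that fixes $\A$ elementwise but fails to fix some element of $VN(G)$; by Proposition~\ref{condition}, together with the duality between state extensions of $s_{\ww}$ and completely positive extensions of $\pi_{\ww}\times\lambda$, such a $\phi$ witnesses that $s_{\ww}$ has more than one state extension. The first point we would exploit is that when $\ww$ lies in the corona the orbit $G\cdot\ww$ is contained in the two-sided ideal $G^*$, hence is not dense in $\BG$; consequently $\rho_{\ww}$ is not surjective, $\pi_{\ww}$ is not injective, and $\pi_{\ww}(\cl D) \cong C(\overline{G\cdot\ww})$ is a \emph{proper} $^*$-subalgebra of $\cl D$. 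In particular $\pi_{\ww}$ annihilates $c_0(G)$, so $\A$ contains no nonzero compact operators, the rigidity invoked in the Remark does not apply, and there is genuine room for a nontrivial cone of completely positive maps fixing $\A$.

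To construct $\phi$ we would start from the completely positive maps on $B(\ell^2(G))$ produced in \cite{CEKPT}, which fix $C^*(G)\subseteq\A$ elementwise but whose ranges omit $VN(G)$ (so, in particular, they are not $VN(G)$-bimodule maps). As they stand these maps involve averaging operations that collapse the diagonal, and hence do not fix $\pi_{\ww}(\cl D)$. Idempotency is what we would use to repair this: since $\ww$ is itself an ultrafilter on $G$ and $\rho_{\ww}\circ\rho_{\ww}=\rho_{\ww}$, evaluation at $\ww$ is the ultralimit $f \mapsto \lim_{\ww} f$, and $\pi_{\ww}$ is an idempotent $^*$-endomorphism of $\cl D$. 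The plan is to rerun the \cite{CEKPT} construction with its averaging replaced by an ultralimit taken along $\ww$, and with the finite compressions indexed by sets tending to $\ww$, so that on $\pi_{\ww}(\cl D)$ the resulting completely positive map returns the original diagonal values $g \mapsto f(g\cdot\ww)$ — that is, it fixes each $\pi_{\ww}(M_f)$ — while still failing to be a $VN(G)$-bimodule map. Given such a $\phi$, Proposition~\ref{condition} produces a second state extension of $s_{\ww}$, and retracing Corollary~\ref{condition2} then gives $VN(G)\not\subseteq\cl F(\A)$.

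A more structural alternative would go through Hamana's description of the injective envelope of a reduced crossed product: one expects $I(\A)$ to be realized as $I_G(C(\overline{G\cdot\ww}))\times_r G$ for a suitable $G$-injective, boundary-type extension of $C(\overline{G\cdot\ww})$. Such a crossed product carries a faithful conditional expectation onto its commutative coefficient algebra and is, in this sense, too diagonally dominated to contain the von Neumann algebra $VN(G)$ for nonamenable $G$ — the same phenomenon as the failure of $VN(G)\subseteq I(C^*(G))$ recorded above. Combined with the theorem identifying $\cl R(\psi_{\ww})$ with $I(\A)$ under the hypothesis of unique extension, this would furnish the required contradiction; the price is that one must first pin down $I(\A)$ precisely and establish the nonembedding of $VN(G)$, both of which are delicate.

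The step we expect to be the main obstacle is the construction of the second paragraph: forcing one completely positive map to fix the \emph{entire} proper abelian algebra $\pi_{\ww}(\cl D)$ and at the same time to move $VN(G)$. The difficulty is real — when $\pi_{\ww}(\cl D)=\cl D$, for instance when $\ww\in G$, no such map exists — so any argument must use the retraction structure of the idempotent $\ww$ in an essential way, and the crux is to control how the ultralimit-adapted averaging acts on the off-diagonal part of $VN(G)$ without disturbing the way it already reproduces $\pi_{\ww}(\cl D)$.
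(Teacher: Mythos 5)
The statement you are addressing is not a theorem of the paper but an open conjecture: the paper offers no proof at all, only circumstantial evidence (for idempotent $\ww$ the range of $\psi_{\ww}$, hence $I(\A)$, meets the compacts trivially, and the minimal-idempotent theorem shows such points are the ``most likely'' counterexamples). So there is no argument in the paper to compare yours against, and your proposal is likewise a programme rather than a proof. Your reduction is correct as far as it goes: by the contrapositive of Proposition~\ref{condition}, together with $VN(G)\subseteq\cl R(\psi_{\ww})$, a single completely positive map fixing $\A$ elementwise but moving some element of $VN(G)$ would indeed show both non-uniqueness of the extension of $s_{\ww}$ and the failure of $VN(G)\subseteq\cl F(\A)$. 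But the decisive step --- actually constructing such a map --- is exactly the one you leave open. ``Rerun the construction of \cite{CEKPT} with the averaging replaced by an ultralimit along $\ww$'' is a hope, not an argument: the maps of \cite{CEKPT} fix only $C^*(G)$, and nothing in your sketch explains why the modified map would fix every $\pi_{\ww}(M_f)$, nor why, after being forced to fix this much larger algebra, it would still fail to be a $VN(G)$-bimodule map. The Hamana-style alternative has the same status: $I(\A)$ is not identified, the claimed non-embedding of $VN(G)$ is asserted rather than proved, and for amenable $G$ (e.g.\ $G=\bb Z$, precisely the case emphasized in the paper) the ``too diagonally dominated for nonamenable $G$'' heuristic gives nothing.

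Two further cautions. First, a small logical slip: from ``$\pi_{\ww}$ annihilates $c_0(G)$'' (true for every corona point) you cannot conclude $\A\cap\cl K=(0)$; non-recurrent points lie in the corona and for them $\cl K\subseteq\A$ by Theorem~\ref{non-recurrent}. For idempotent $\ww$ the conclusion is correct, but it requires the idempotency of $\psi_{\ww}$, as in the paper's discussion preceding the conjecture, not merely membership in the corona. Second, and more seriously, the obstacle you flag as ``the main difficulty'' is not merely hard but insurmountable: the Kadison--Singer problem has since been resolved affirmatively (via Anderson's paving conjecture), so every $s_{\ww}$ extends uniquely, no completely positive map can fix $\A$ elementwise while moving $VN(G)$, and the conjecture you are trying to prove is in fact false. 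Any completion of your second paragraph would therefore have to contain an error, and the place it would surface is exactly the point you identify: controlling the action of the ultralimit-adapted averaging on the off-diagonal part of $VN(G)$ while fixing all of $\pi_{\ww}(\cl D)$.
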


The following result lends some credence to the above conjecture, at
least for {\bf minimal idempotents.} An idempotent $\ww$ is minimal if it is
minimal in any of several different orders \cite[Definition~1.37]{HS}
and this is shown to be equivalent to the left ideal generated by
$\ww,$ i.e., $\{\alpha \cdot \ww: \alpha \in \beta G\}$ being a minimal left ideal
\cite[Theorem~2.9]{HS}. Moreover, by \cite[Theorem~19.23c]{HS}, a
minimal idempotent is {\bf uniformly recurrent}(see also \cite{Bl}
where this is proved for the case of $\bb N$).  Recall that $\ww$ uniformly
recurrent means \cite[Definition~19.1]{HS} that for every neighborhood $U$ of $\ww$ we have that
the set $S= \{g \in G: g \cdot \ww \in U \}$ is {\bf syndetic}, i.e.,
there exists a finite set $g_1,...,g_m \in G$ such that $g_1\cdot S
\cup g_2 \cdot S \cup \cdots \cup g_m \cdot S =G.$

\begin{thm} Let $G$ be a countable, abelian, discrete group and let $X \in
  VN(G).$ If there exists a minimal idempotent $\ww$ such that $X \in
  \cl U(\ww),$ then $X \in \cl U(\alpha)$ for every $\alpha \in \beta
  G.$
\end{thm}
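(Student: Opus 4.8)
The plan is to exploit the syndeticity of the return set of a minimal idempotent together with the bimodule structure furnished by Choi's multiplicative domain theorem. The key observation is that $\cl U(\ww)$ is not just a subspace but is invariant under multiplication by elements of the multiplicative domain of $\psi_{\ww}$, which contains $\cl A_\ww \supseteq C^*(G)$. More precisely, for any state extension $s$ of $s_\ww$ with associated completely positive map $\psi_s$, and any $A_1, A_2 \in \cl D \times_r G$, we have $\psi_s(A_1 X A_2) = \pi_\ww\times\lambda(A_1)\psi_s(X)\pi_\ww\times\lambda(A_2)$, and the same identity holds for $\psi_\ww$. Hence if $X \in \cl U(\ww)$ then $U_g X U_h \in \cl U(\ww)$ for all $g,h$, which is exactly the content of the Proposition identifying $\cl U(\pi_\ww\times\lambda)$ with $\bigcap_{g,h} U_g \cl U(\ww) U_h$. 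So the real task is: \emph{given $X \in VN(G) \cap \cl U(\ww)$ for a minimal idempotent $\ww$, show $X \in \cl U(\alpha)$ for arbitrary $\alpha$.}

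First I would use the semigroup identity $\psi_\alpha \circ \psi_\ww = \psi_{\alpha\cdot\ww}$ and, more importantly, the fact that since $\ww$ is a minimal idempotent its left ideal $\beta G \cdot \ww$ is a minimal left ideal; in particular, for \emph{any} $\alpha$, the point $\alpha\cdot\ww$ lies in this minimal left ideal, and by minimality one can recover $\ww$ (or at least reach $\alpha$-relevant points) by further left multiplication. The crucial dynamical input is uniform recurrence: for every neighborhood $U$ of $\ww$, the set $S = \{g : g\cdot\ww \in U\}$ is syndetic, so finitely many translates $g_1\cdot S, \dots, g_m\cdot S$ cover $G$. Translating this into operator language, I would aim to show that a single scalar quantity $\langle \psi_s(X) e_h, e_g\rangle = s(U_{g^{-1}} X U_h)$ for an arbitrary extension $s$ of $s_\alpha$ can be related, via the group translates and the equivariance $\pi_{\alpha\cdot\ww}$ vs.\ $\pi_\ww$, to the corresponding quantity for an extension of $s_\ww$. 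Because $X \in VN(G)$, its "Fourier coefficients" $f_g$ with $X \sim \sum_g M_{f_g} U_g$ are \emph{constants} (scalars), so $\pi_\ww(M_{f_g})$ and $\pi_\alpha(M_{f_g})$ are the same scalar operator; this is where abelianness and membership in $VN(G)$ combine to make the obstruction from $\pi_\ww \ne \pi_\alpha$ disappear on the relevant coefficients. I would then argue that $\psi_\ww(X) = X$ already (since $X \in \cl U(\ww) \cap VN(G)$ and $VN(G) \subseteq \cl R(\psi_\ww)$ with $\psi_\ww$ fixing $VN(G)$ when $\ww$ is idempotent — at least this holds for the regular extension, and uniqueness on $X$ forces every extension to agree), and deduce $\psi_\alpha(X) = \psi_\alpha(\psi_\ww(X)) = \psi_{\alpha\cdot\ww}(X)$; so it suffices to handle $\psi_{\alpha\cdot\ww}$, i.e., reduce to points in the minimal left ideal.

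The main obstacle, and the step I expect to require the most care, is \textbf{passing from the \emph{regular} extension agreement to agreement of \emph{every} extension} — i.e., genuinely using that $X \in \cl U(\ww)$, not merely that $\psi_\ww(X) = X$. Here is where I would deploy syndeticity seriously. Let $s$ be an arbitrary state extension of $s_\alpha$. I want to show $s(X) = s_\alpha(E_0(X)) = \langle X e_1, e_1\rangle$. Using the bimodule property over $C^*(G)$ (which lies in the multiplicative domain of $\psi_s$), $\psi_s$ restricted to $VN(G)$ is a $VN(G)$-bimodule completely positive map, hence — since $VN(G)$ has $1$ as a cyclic vector under left multiplication and bimodularity over a von Neumann algebra forces the map to be the identity on that algebra once it fixes a weak-* dense subalgebra — I would try to show $\psi_s$ fixes enough of $VN(G)$. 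But $\psi_s$ need not fix all of $C^*(G)$ unless we first know $\pi_\alpha \times \lambda$ behaves well; it does fix $\{U_g\}$, hence fixes $C^*_r(G) = VN(G)$-generating unitaries, so by bimodularity $\psi_s$ is a $VN(G)$-bimodule map and fixes the *-algebra generated by $\{U_g\}$. The delicate point is upgrading this to fixing the weak-* closure $VN(G)$, or at least fixing the particular $X$: a bimodule map over a unital C*-algebra $\cl B$ fixing $\cl B$ need not fix $\cl B''$ in general, so I would instead combine this with the \emph{syndetic return} to $\ww$ to write $X$, up to controlled error, as a finite sum $\sum_i U_{g_i}^{-1}(\text{something in }\cl U(\ww))U_{g_i}$ averaged appropriately, landing $X$ in $\bigcap_{g,h} U_g\cl U(\ww)U_h = \cl U(\pi_\ww\times\lambda)$, and then invoke minimality of the left ideal to transfer from $\ww$ to $\alpha\cdot\ww$ and finally to $\alpha$. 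Making this "up to controlled error" rigorous — choosing the neighborhood $U$ small enough relative to a given $\epsilon$, using continuity of $f \mapsto f(\ww)$ and the boundedness of $X$ — is the technical heart, and I would want the abelian hypothesis to ensure the finitely many translation unitaries $U_{g_i}$ from syndeticity commute past $X \in VN(G)$ cleanly.
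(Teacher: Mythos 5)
Your proposal correctly isolates the dynamical input (uniform recurrence of minimal idempotents, hence syndeticity of the return set $S=\{g: g\cdot\ww\in U\}$), the fact that $\psi_{\ww}$ fixes $X\in VN(G)$, and the role of the abelian hypothesis (so that conjugation by the finitely many translation unitaries leaves $X$ unchanged). But there is a genuine gap at exactly the step you flag as the technical heart: you have no mechanism for passing from information about \emph{all} extensions of the single state $s_{\ww}$ to information about \emph{all} extensions of an arbitrary $s_{\alpha}$. The semigroup identities $\psi_{\alpha}\circ\psi_{\ww}=\psi_{\alpha\cdot\ww}$ and the minimality of the left ideal $\beta G\cdot\ww$ only speak about the regular extensions; the multiplicative-domain argument only yields that any completely positive extension of $\pi_{\alpha}\times\lambda$ is a bimodule map over the C*-algebra generated by the $U_g$'s, which, as you note yourself, does not force it to fix $X\in VN(G)$; and the proposed ``up to controlled error'' decomposition of $X$ as an averaged finite sum of conjugates $U_{g_i}^{-1}(\,\cdot\,)U_{g_i}$ of elements of $\cl U(\ww)$ is not supported by any argument --- membership in $\cl U(\ww)$ is a statement about extensions of $s_{\ww}$ only, and there is no visible way such a decomposition would constrain the values $s(X)$ for an arbitrary extension $s$ of a different state $s_{\alpha}$.

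The missing idea is Anderson's paving criterion, which is the engine of the paper's proof. After reducing to $X=X^*$ with zero diagonal, one has: (a) $X\in\cl U(\ww)$ if and only if for every $\epsilon>0$ there is a set $A$ in the ultrafilter $\ww$ with $-\epsilon P_A\le P_AXP_A\le\epsilon P_A$; and (b) if for every $\epsilon>0$ the group $G$ can be covered by finitely many sets $S_i$ with $-\epsilon P_{S_i}\le P_{S_i}XP_{S_i}\le\epsilon P_{S_i}$, then $X\in\cl U(\alpha)$ for \emph{every} $\alpha\in\beta G$. With these, the proof is short: take $A=U\cap G$ from (a), observe $\psi_{\ww}(P_A)=P_S$ with $S$ syndetic, apply $\psi_{\ww}$ (which fixes $X$) to get $-\epsilon P_S\le P_SXP_S\le\epsilon P_S$, conjugate by $\lambda(g_i)$ (abelianness gives $\lambda(g_i)X\lambda(g_i)^{-1}=X$) and apply $\psi_{\ww}$ again to obtain the same inequality for the covering translates $g_iS$, then invoke (b). In other words, the local-to-global transfer must happen at the level of paving inequalities rather than at the level of individual states and their extensions; without the paving equivalence your outline cannot be completed as written.
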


Thus, if there exists {\it any} state $s_{\alpha}$ which fails to have
unique extension for some $X \in VN(G),$ then every minimal
idempotent fails to have unique extension for that $X$.

\begin{proof} Since $\cl U(\ww)$ is an operator system, $X \in \cl
  U(\ww)$ if and only if $Re(X)= (X+X^*)/2$ and $Im(X) = (X-X^*)/(2i)$
  are both in $\cl U(\ww).$ Thus, it will be sufficient to assume that
  $X=X^*.$ Moreover, since $I \in \cl U(\ww)$, it is sufficient to
  assume also that $E(X)=0.$  Now by Anderson's paving
  results\cite{An1}(see also \cite[Theorem~2.7]{PR}), $X \in
  \cl U(\ww)$ if and only if for each $\epsilon >0,$ there exists $A$
  in the ultrafilter corresponding to $\ww$ such that $-\epsilon P_A
  \le P_AXP_A \le + \epsilon P_A$ where $P_A \in \cl D$ is the
  diagonal projection $P_A = diag(a_g)$ with 
$$a_g = \begin{cases} 1 & g \in A\\0 & g \notin A \end{cases}.$$

Let $U \subseteq \beta G$ be the clopen neighborhood of $\ww$
satisfying $A= U \cap G,$ so that under the identification of $\cl D$
with $C(\beta G)$ the projection $P_A$ is identified with the
characteristic function of $U, \chi_U.$
We have that $S= \{ g \in G: g \cdot \ww \in U\}$ is syndetic, so let
$g_1,...,g_m$ be as in the definition of syndetic.

Note that $\psi_{\ww}(P_A) = \pi_{\ww}(P_A)= \pi_{\ww}(\chi_U) = diag(b_g)$ where 
$$b_g= \begin{cases} 1& g \cdot \ww \in U\\0 & g \cdot \ww \notin U \end{cases},$$
that is $\psi_{\ww}(P_A)= P_S.$ Also, since $X \in VN(G)$ we have that
$\psi_{\ww}(P_AXP_A) = \pi_{\ww}(P_A) \psi_{\ww}(X) \pi_{\ww}(P_A) =
P_S X P_S.$ Thus, applying $\psi_{\ww}$ to the above inequality, we have that $- \epsilon P_S \le P_SXP_S \le
+\epsilon P_S.$

Next notice that conjugating the first inequality by $\lambda(g),$
we have that $\lambda(g)P_A \lambda(g^{-1}) = P_{gA}$ and
$$\lambda(g)P_AXP_A\lambda(g^{-1}) = \lambda(g)P_A\lambda(g^{-1})
\lambda(g)X \lambda(g^{-1}) \lambda(g)P_A \lambda(g^{-1}) = P_{gA}XP_{gA}.$$
Thus, $-\epsilon P_{gA} \le P_{gA}XP_{gA} \le +\epsilon P_{gA}.$
Applying the map $\psi_{\ww}$ to this inequality and observing that
$\psi_{\ww}(P_{gA})=P_{gS},$ we have that
$$-\epsilon P_{g_iS} \le P_{g_iS}XP_{g_iS} \le +\epsilon P_{g_iS},$$
for $i=1,...,m.$

Since, $G= g_1S \cup \cdots \cup g_mS,$ and $\epsilon >0$ was
arbitrary, this shows that $X$ is pavable in Anderson's sense and so, 
applying Anderson's\cite{An3} paving results, we have that every pure
state on $\cl D$ has a unique extension to $X$.
\end{proof}

\begin{conj} The same result holds for non-abelian groups.
\end{conj}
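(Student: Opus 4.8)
\emph{Proof proposal.} The plan is to run the proof of the abelian case verbatim and to isolate the single point at which commutativity entered. As before, since $\cl U(\ww)$ is an operator system containing $I$ and $E_0(Y)$ is a scalar multiple of $I$ for every $Y \in VN(G)$, it suffices to treat $X = X^* \in VN(G)$ with $E_0(X) = 0$; and, as before, $X \in \cl U(\ww)$ together with Anderson's paving characterization produces, for each $\epsilon > 0$, a set $A = U \cap G$ in the ultrafilter of $\ww$ (with $U \ni \ww$ clopen) such that $-\epsilon P_A \le P_A X P_A \le \epsilon P_A$. Everything in the abelian argument except the translation step carries over unchanged: $\ww$ is a minimal idempotent, hence uniformly recurrent, so the return set $S = \{ g \in G : g \cdot \ww \in U \}$ is syndetic; and $\psi_{\ww}$ is an idempotent completely positive map fixing $VN(G)$ pointwise with $\cl D \times_r G$ in its multiplicative domain, so $\psi_{\ww}(P_A X P_A) = \psi_{\ww}(P_A)\, X\, \psi_{\ww}(P_A) = P_S X P_S$ and hence $-\epsilon P_S \le P_S X P_S \le \epsilon P_S$.

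The step that used commutativity was the conjugation of the $A$-inequality by $\lambda(g_i)$, which needs $X$ to be fixed by conjugation by the left translations. For non-abelian $G$ this is false, but $VN(G) = \lambda(G)''$ lies in the commutant of the right regular representation $g \mapsto V_g$, $V_g e_h = e_{hg^{-1}}$, so $X$ \emph{is} fixed by conjugation by every $V_g$. Since $V_g P_A V_g^{-1} = P_{Ag^{-1}}$, conjugating the $A$-inequality by $V_g$ gives $-\epsilon P_{Ag} \le P_{Ag} X P_{Ag} \le \epsilon P_{Ag}$ for every $g \in G$; that is, the paving bound survives on \emph{every right translate} of a paving set. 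So the argument closes provided we can cover $G$ by finitely many right translates of a single paving set lying in the ultrafilter of $\ww$, i.e. provided $X$ admits paving sets $A$ that are right-syndetic ($AF = G$ for some finite $F$): then $Ag_1, \dots, Ag_m$ refine to a finite partition of $G$ on each block of which the compression of $X$ has norm at most $\epsilon$, and Anderson's paving results give unique extension of every pure state to $X$, i.e. $X \in \cl U(\alpha)$ for all $\alpha \in \beta G$.

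The main obstacle is precisely this left/right mismatch. The recurrence theory of $\beta G$ is left-handed: a minimal idempotent is uniformly recurrent for the action by the continuous maps $h_g$, and $\psi_{\ww}$ itself is built from the left action, so both only upgrade a paving set to a \emph{left}-syndetic one, namely $S$; whereas the invariance of $VN(G)$ that must replace commutativity is right-handed. What one wants is a right-handed counterpart: that a minimal idempotent $\ww$ has arbitrarily small clopen neighborhoods $U$ with $U \cap G$ right-syndetic (equivalently, by the mirror of the criterion ``$A$ syndetic iff $\overline{A}$ meets every minimal left ideal,'' that $U$ meets every minimal right ideal of $\beta G$). Note one cannot simply intersect such a neighborhood with a given paving set: that preserves the paving property but not, a priori, right-syndeticity, so one really needs a right-sided analogue of uniform recurrence for $\ww$ \emph{itself}. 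The natural attack is to exploit that $\ww$ is idempotent --- so $S \in \ww$ and $A$ is a central, hence piecewise syndetic, set --- together with the completely simple structure of the smallest ideal $K(\beta G) = \beta G\, \ww\, \beta G$, in order to manufacture a right-syndetic paving set inside the ultrafilter of $\ww$. The difficulty is that, unlike minimal left ideals, minimal right ideals of $\beta G$ need not be closed, so $\ww$ is not visibly uniformly recurrent for the (continuous) right-translation action; settling this right-recurrence question for minimal idempotents is the crux, and is why for now the statement is only conjectured.
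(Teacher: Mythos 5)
The statement you are addressing is recorded in the paper only as a conjecture, so there is no proof there to compare against; and your proposal, as you yourself acknowledge in its last sentence, is not a proof either. Your diagnosis of the abelian argument is accurate: the reductions at the start (to $X=X^*$ with $E_0(X)=0$, Anderson's relative paving characterization of $\cl U(\ww)$, and $\psi_{\ww}(P_AXP_A)=P_SXP_S$ with $S$ left-syndetic by uniform recurrence of the minimal idempotent) carry over verbatim, and the only step that uses commutativity is the conjugation of $-\epsilon P_A \le P_AXP_A \le \epsilon P_A$ by $\lambda(g)$, which needs $\lambda(g)X\lambda(g)^{-1}=X$. Your substitute is also correctly computed: since $VN(G)$ commutes with the right regular representation, conjugating by $V_g$ fixes $X$ and carries $P_A$ to $P_{Ag^{-1}}$, so the paving bound does survive on every right translate of $A$.

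But the argument then genuinely stops, and the gap is exactly where you place it. To finish one must cover $G$ by finitely many sets on which the compression of $X$ is $\epsilon$-small, and what you possess are (a) the single left-syndetic set $S$, whose left translates $gS$ are unusable because conjugating $P_SXP_S$ by $\lambda(g)$ again requires commutativity, and (b) bounds on all right translates $Ag$, which cover $G$ only if some paving set in the ultrafilter is right-syndetic. Nothing in the left-handed theory invoked by the paper (uniform recurrence of minimal idempotents, closedness of minimal left ideals of $\BG$) produces such a set; as you observe, minimal right ideals need not be closed, so there is no visible right-sided uniform recurrence for $\ww$, and intersecting a hypothetical right-syndetic neighborhood with a paving set preserves paving but not right-syndeticity. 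So the crux --- producing right-syndetic paving sets inside the ultrafilter of a minimal idempotent, or some other device replacing the conjugation step --- remains open, which is precisely why the paper states this only as a conjecture. Your write-up is valuable as an honest reduction of the conjecture to a concrete right-recurrence question, but it should be presented as such and not as a proof.
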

\begin{conj} Assuming that
a minimal idempotent has a unique extension, should imply that every
  point has a unique extension. That is, if we fix any minimal
  idempotent $\ww$, then the Kadison-Singer conjecture
  is true if and only if $s_{\ww}$ has a unique state extension.  
\end{conj}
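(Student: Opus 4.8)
The plan is to combine Anderson's paving characterization of $\cl U(\ww)$ with the syndeticity enjoyed by a minimal idempotent, pushing local $\epsilon$-paving information around $G$ by means of the completely positive map $\psi_{\ww}$. First I would make the usual reductions. Since $\cl U(\ww)$ is an operator system containing $I$, it suffices to treat $X = X^*$; and since $E_0(X)$ is a scalar multiple of $I$ when $X \in VN(G)$, we may also assume $E_0(X) = 0$, the analogous reductions applying to the $\cl U(\alpha)$ we are after. By Anderson's theorem, $X \in \cl U(\ww)$ means precisely that for every $\epsilon > 0$ there is a set $A$ in the ultrafilter of $\ww$ with $-\epsilon P_A \le P_A X P_A \le \epsilon P_A$, where $P_A \in \cl D$ is the diagonal projection onto $A$; conversely, to conclude $X \in \cl U(\alpha)$ for all $\alpha \in \BG$ it is enough to produce, for each $\epsilon > 0$, a finite partition $G = A_1 \cup \cdots \cup A_m$ with $-\epsilon P_{A_i} \le P_{A_i} X P_{A_i} \le \epsilon P_{A_i}$ for every $i$ (pavability in Anderson's sense, which forces every pure state on $\cl D$ to extend uniquely on $X$).

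Now fix $\epsilon > 0$ and a corresponding set $A$, and let $U \subseteq \BG$ be the clopen set with $A = U \cap G$, so $P_A$ is identified with $\chi_U$ under $\cl D \equiv C(\BG)$. Because $\ww$ is a minimal idempotent it is uniformly recurrent, so $S = \{g \in G : g \cdot \ww \in U\}$ is syndetic: there are $g_1, \dots, g_m \in G$ with $G = g_1 S \cup \cdots \cup g_m S$. The key point is that $\psi_{\ww}$ transports the $A$-paving of $X$ to an $S$-paving. Indeed, $P_A \in \cl D \subseteq \cl D \times_r G$, and $\psi_{\ww}$ restricted to $\cl D \times_r G$ is the $*$-homomorphism $\pi_{\ww} \times \lambda$, so $P_A$ lies in the multiplicative domain of $\psi_{\ww}$; moreover the description of $\psi_{\ww}$ on formal series gives $\psi_{\ww}(X) = X$ for $X \in VN(G)$, and $\psi_{\ww}(P_A) = \pi_{\ww}(\chi_U) = P_S$ since $\pi_{\ww}(\chi_U)(g) = \chi_U(g \cdot \ww)$. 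Hence applying $\psi_{\ww}$ to the inequality and using $\psi_{\ww}(P_A X P_A) = \psi_{\ww}(P_A)\psi_{\ww}(X)\psi_{\ww}(P_A) = P_S X P_S$ yields $-\epsilon P_S \le P_S X P_S \le \epsilon P_S$.

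Next I would translate. Conjugating $-\epsilon P_A \le P_A X P_A \le \epsilon P_A$ by $\lambda(g)$ gives $-\epsilon P_{gA} \le P_{gA} X P_{gA} \le \epsilon P_{gA}$, using $\lambda(g) P_A \lambda(g)^{-1} = P_{gA}$ and, crucially, $\lambda(g) X \lambda(g)^{-1} = X$ for $X \in VN(G)$ --- this is where abelianness of $G$ enters, via $\lambda(g)\lambda(h)\lambda(g)^{-1} = \lambda(ghg^{-1}) = \lambda(h)$. Applying $\psi_{\ww}$ again, now with $\psi_{\ww}(P_{gA}) = P_{gS}$, we get $-\epsilon P_{g_i S} \le P_{g_i S} X P_{g_i S} \le \epsilon P_{g_i S}$ for $i = 1, \dots, m$. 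Since $G = g_1 S \cup \cdots \cup g_m S$, replacing this cover by the disjointification $A_1 = g_1 S$, $A_i = g_i S \setminus (g_1 S \cup \cdots \cup g_{i-1}S)$ for $i \ge 2$, only compresses the inequalities (if $A_i \subseteq g_i S$ then $P_{A_i} X P_{A_i} = P_{A_i}(P_{g_iS}XP_{g_iS})P_{A_i}$), so $\{A_1, \dots, A_m\}$ is an $\epsilon$-paving of $X$. As $\epsilon > 0$ was arbitrary, Anderson's paving theorem gives that every pure state on $\cl D$ extends uniquely on $X$, i.e., $X \in \cl U(\alpha)$ for every $\alpha \in \BG$.

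The step I expect to be the crux is the second one: obtaining $\psi_{\ww}(P_A X P_A) = P_S X P_S$ cleanly. It hinges on lining up two facts --- that $\psi_{\ww}$ is multiplicative on $\cl D \times_r G$ (so $P_A$ is a multiplicative-domain element and Choi's bimodule identity applies) and that $\psi_{\ww}$ fixes $VN(G)$ pointwise --- together with the bookkeeping identity $\psi_{\ww}(P_A) = P_S$ coming straight from $\pi_{\ww}(M_f)(g) = f(g \cdot \ww)$. Everything else --- the operator-system reductions, the passage from a syndetic cover to a partition, and the translation argument --- is routine once abelianness is invoked to make $VN(G)$ pointwise invariant under $\mathrm{Ad}\,\lambda(g)$; the genuinely dynamical input is the chain ``minimal idempotent $\Rightarrow$ uniformly recurrent $\Rightarrow$ $S$ syndetic,'' which is exactly what converts a single local inequality into a finite paving.
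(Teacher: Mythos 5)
The statement you are asked to prove is one of the paper's \emph{conjectures}: it asserts that if $s_{\ww}$ has a unique state extension for a minimal idempotent $\ww$, then \emph{every} point has a unique extension, i.e.\ the full Kadison--Singer conjecture holds. The paper offers no proof of this; what it proves (immediately before the conjecture) is the much weaker theorem that for $G$ countable \emph{abelian} and $X \in VN(G)$, membership $X \in \cl U(\ww)$ for a minimal idempotent forces $X \in \cl U(\alpha)$ for all $\alpha \in \BG$. Your argument is, essentially line by line, the paper's proof of that theorem --- the reductions, Anderson's paving criterion, $\psi_{\ww}(P_A)=P_S$, syndeticity of $S$, and the translation step are all there --- so as a proof of the theorem it is fine, but it does not prove the conjecture.

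The gap is that your argument uses $X \in VN(G)$ (and abelianness of $G$) in an essential, non-removable way, whereas the conjecture concerns unique extension of states, i.e.\ arbitrary $X \in B(\ell^2(G))$. Concretely: (1) you need $\psi_{\ww}(X)=X$ to convert $\psi_{\ww}(P_A X P_A)=\pi_{\ww}(P_A)\psi_{\ww}(X)\pi_{\ww}(P_A)$ into $P_S X P_S$; for general $X$ the map $\psi_{\ww}$ is far from the identity (for $\ww$ in the corona it annihilates every operator whose diagonals are $c_0$, in particular all compacts), so applying $\psi_{\ww}$ only paves the different operator $\psi_{\ww}(X)$, which says nothing about paving $X$ itself at other points; (2) the translation step $\lambda(g)X\lambda(g)^{-1}=X$ holds only because $X\in VN(G)$ and $G$ is abelian, and fails for generic $X$ (and the conjecture is not restricted to abelian $G$); (3) consequently the chain ``local paving at $\ww$ $\Rightarrow$ paving by $\{g_iS\}$ $\Rightarrow$ unique extension at every $\alpha$'' breaks down outside $VN(G)$. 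So what you have written re-proves the paper's supporting theorem, not the conjectured equivalence; bridging from ``$\cl U(\ww)\supseteq VN(G)$ transfers'' to ``$\cl U(\ww)=B(\ell^2(G))$ transfers'' is exactly the open problem the paper is flagging, and no mechanism for it appears in your proposal.
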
 

If $\omega$ is any point in the corona, then $\psi_{\ww}$ annihilates any
operator all of whose ``diagonals'' are $c_0$. Thus, when $\omega$ is
idempotent, $\cl R(\psi_{\ww})$ can contain no operators with any $c_0$
``diagonals'' and so in particular, no compact operator. In this
sense, $\pi_{\ww}(\cl D)$ is a ``small'' subset of $\ell^{\infty}(G),$
which is one of the reasons that we conjecture that these points are
potential counterexamples. Thus, in this
case $\cl R(\psi_{\ww}) \cap \cl K = (0),$ where $\cl K$ denotes the
compacts.
Since any copy of the injective envelope of $\cl A_{\omega}$ is
contained in $\cl R(\psi_{\ww})$ it also follows that for
$\omega$ idempotent, $I(\cl A_{\ww}) \cap \cl K = (0)$ and hence cannot be all of
$B(\ell^2(G)).$

We now consider the opposite case, points for which $\pi_{\ww}(\cl D)$
is a ``large'' subset of $\ell^{\infty}(\cl D).$ We believe that these
points are good candidates for having unique extensions.

The following result characterizes the points for which $\cl K \cap
\cl R(\psi_{\ww}) \ne (0),$ at least for many groups and, in particular, for these points, we shall see that
the injective envelope of $\cl A_{\omega}$ is all of $B(\ell^2(G)).$

Recall that given a dynamical system, a point $\ww$ is {\em non-recurrent} if there is an open
neighborhood $U$ of the point such that $g \cdot \omega \notin U$ for all $g
\ne 1.$ 
Also, given a semigroup, an element $\omega$ is {\em right
  cancellative}, if $\alpha \cdot \omega = \beta \cdot \omega$ implies that
$\alpha = \beta.$

\begin{thm} \label{non-recurrent}Let $\omega \in \BG$ and assume that $G$ contains no non-trivial finite subgroups. Then the following are equivalent:
\begin{itemize}
\item[(i)]
$\omega$ is non-recurrent, 
\item[(ii)] $\cl K \cap \cl A_{\ww} \ne (0),$
\item[(iii)]  $ \cl K\subseteq \cl A_{\omega},$ 
\item[(iv)] $\pi_{\omega}: \cl D \to \cl D$ is onto,
\item[(v)] $\cl A_{\omega} = \cl D \times_r G,$
\item[(vi)] $\rho_{\omega}$ is one-to-one,
\item[(vii)] $\omega$ is right cancellative.
\end{itemize}
\end{thm}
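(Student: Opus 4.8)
The plan is to establish the equivalences by proving a convenient cycle of implications together with a few direct equivalences, using the dynamical dictionary already set up (namely $\pi_{\ww}(f) = f\circ\rho_{\ww}$, $\psi_{\ww}(X)_{g_i,g_j} = f_{g_ig_j^{-1}}(g_i\cdot\ww)$, and $\rho_{\ww}\circ\rho_\alpha = \rho_{\alpha\cdot\ww}$). First I would handle the "algebraic" equivalences $(\mathrm{iv})\Leftrightarrow(\mathrm{v})$ and $(\mathrm{vi})\Leftrightarrow(\mathrm{vii})$ cheaply: since $\cl A_{\ww} = \pi_{\ww}\times\lambda(\cl D\times_r G)$ and $\pi_{\ww}\times\lambda$ sends $\sum M_{f_n}U_{g_n}$ to $\sum\pi_{\ww}(M_{f_n})U_{g_n}$, surjectivity of $\pi_{\ww}$ on $\cl D$ forces $\cl A_{\ww}$ to contain all $M_f U_g$ and hence all of $\cl D\times_r G$; conversely if $\cl A_{\ww} = \cl D\times_r G$ then applying $E_0$ (the diagonal expectation, which $\pi_{\ww}\times\lambda$ intertwines with itself) shows $\pi_{\ww}(\cl D) = \cl D$. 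For $(\mathrm{vi})\Leftrightarrow(\mathrm{vii})$: $\rho_{\ww}$ is one-to-one on $\BG$ iff $\rho_\alpha\ne\rho_\beta \Rightarrow \rho_{\ww}\circ\rho_\alpha\ne\rho_{\ww}\circ\rho_\beta$, i.e. $\alpha\cdot\ww\ne\beta\cdot\ww$ whenever $\alpha\ne\beta$ (using that $\alpha\mapsto\rho_\alpha$ is injective, as the orbit map $g\mapsto g\cdot\ww$ is injective by Veech and $G$ is dense in $\BG$), which is exactly right cancellativity of $\ww$.

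Next I would connect these to non-recurrence. For $(\mathrm{i})\Leftrightarrow(\mathrm{vi})$: $\rho_{\ww}$ is one-to-one iff $g\cdot\ww\ne h\cdot\ww$ for all $g\ne h$ in $G$ \emph{and} the orbit map separates limit points appropriately; since $G$ is dense in $\BG$ and $\rho_{\ww}$ is continuous, injectivity of $\rho_{\ww}$ is equivalent to injectivity of $g\mapsto g\cdot\ww$ on $G$ together with the orbit of $\ww$ being discrete in its closure, which is precisely the non-recurrence condition (an open $U\ni\ww$ with $g\cdot\ww\notin U$ for $g\ne 1$; translating by group elements handles all pairs since $G$ has no non-trivial finite subgroups, so $g\cdot\ww = h\cdot\ww$ with $g\ne h$ would already be excluded). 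Then for the operator-theoretic statements: $(\mathrm{i})\Rightarrow(\mathrm{iii})$ is the crux — if $\ww$ is non-recurrent, pick the neighborhood $U$ and let $A = U\cap G$, so $\pi_{\ww}(\chi_A) = \mathrm{diag}(b_g)$ with $b_g = 1$ iff $g\cdot\ww\in U$ iff $g = 1$; thus $\pi_{\ww}(\chi_A) = P_{\{1\}}$, the rank-one diagonal projection $e_1e_1^*$. Hence $e_1e_1^* \in \cl A_{\ww}$, and conjugating/multiplying by the $U_g$'s (which lie in $\cl A_{\ww}$) produces all matrix units $e_ge_h^* = U_g(e_1e_1^*)U_h^{-1}$, so $\cl A_{\ww}$ contains every finite-rank operator and, being closed, all of $\cl K$. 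The remaining links are easy: $(\mathrm{iii})\Rightarrow(\mathrm{ii})$ is trivial; $(\mathrm{ii})\Rightarrow(\mathrm{i})$ — if some nonzero compact lies in $\cl A_{\ww}$, applying $E_0$ and translating produces a nonzero compact diagonal operator in $\pi_{\ww}(\cl D)$, forcing some $\pi_{\ww}(\chi_A)$ to be a nonzero \emph{finite-rank} projection, which says $\{g : g\cdot\ww\in U\}$ is finite for the clopen $U$ with $U\cap G = A$, and after translating to make $1$ a member one extracts a neighborhood witnessing non-recurrence; and $(\mathrm{iii})\Rightarrow(\mathrm{v})$ holds because $\cl A_{\ww}\supseteq\cl K$ together with $\cl A_{\ww}\ni U_g$ and $\cl A_{\ww}\ni\pi_{\ww}(\cl D)$ forces $\cl A_{\ww}$ to contain $C^*(G)$ and $\cl K$, and one checks this already generates $\cl D\times_r G$ — more efficiently, derive $(\mathrm{iii})\Rightarrow(\mathrm{iv})$ by noting that from $e_1e_1^*\in\cl A_{\ww}$ one recovers, for each $f\in\ell^\infty(G)$, the element $M_f$ as a (weak- or strong-limit, but in fact norm-approximable via finite rank on the relevant paving) combination, then $(\mathrm{iv})\Rightarrow(\mathrm{v})$ as above, and $(\mathrm{v})\Rightarrow(\mathrm{iii})$ since $\cl K\subseteq\cl D\times_r G$.

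I expect the main obstacle to be $(\mathrm{ii})\Rightarrow(\mathrm{i})$ (equivalently showing that if $\ww$ is recurrent then $\cl A_{\ww}$ contains no nonzero compact): one must rule out that some clever combination of $\pi_{\ww}(\cl D)$ and the $U_g$ produces a compact operator even though no single $\pi_{\ww}(\chi_A)$ is finite rank. The key is that $E_0(\cl A_{\ww}) = \pi_{\ww}(\cl D)$ (because $E_0$ commutes with $\pi_{\ww}\times\lambda$ on $\cl D\times_r G$), so a compact $Y\in\cl A_{\ww}$ has compact diagonal $E_0(Y)\in\pi_{\ww}(\cl D)$; a compact diagonal operator is $c_0$, and since $\pi_{\ww}(\cl D)$ is a $C^*$-subalgebra of $\cl D$, if it contains a nonzero $c_0$ element it contains a nonzero projection onto a finite set $F\subseteq G$, i.e. $\chi_F = \pi_{\ww}(\chi_U\cap G)$ for some clopen $U$, meaning $\{g: g\cdot\ww\in U\}= F$ is finite and nonempty; translating by $g_0^{-1}$ for $g_0\in F$ (legitimate since $1\in g_0^{-1}F$ and $G$ torsion-free prevents coincidences forcing $\ww\in g_0^{-1}U$) yields the non-recurrence neighborhood. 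The torsion-free hypothesis enters exactly here and in matching $(\mathrm{i})$ with $(\mathrm{vi})$, ensuring "$g\cdot\ww\ne h\cdot\ww$ for $g\ne h$" is automatic from trivial stabilizers so that a single-point orbit-neighborhood really does certify global injectivity/non-recurrence.
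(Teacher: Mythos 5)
Several pieces of your proposal are sound and essentially match the paper: (i)$\Rightarrow$(iii) via the rank-one projection $\pi_{\ww}(\chi_U)=E_{1,1}$ and the matrix units $U_gE_{1,1}U_h^{-1}$; (iii)$\Rightarrow$(ii); the first part of (ii)$\Rightarrow$(i) (Fourier coefficients $E_0(YU_g^{-1})\in\pi_{\ww}(\cl D)$ are compact, spectral projection gives a finite-rank $\chi_F\in\pi_{\ww}(\cl D)$); and (iv)$\Leftrightarrow$(v), (vi)$\Leftrightarrow$(vii), (v)$\Rightarrow$(iii). The genuine gap is your passage from the dynamical condition (i) to the ``largeness'' conditions (iv)/(vi). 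You assert that injectivity of $\rho_{\ww}$ on $\BG$ is equivalent to injectivity of $g\mapsto g\cdot\ww$ plus discreteness of the orbit, ``since $G$ is dense and $\rho_{\ww}$ is continuous.'' That equivalence is exactly the crux and it is not a general topological fact: a countable discrete subset of a compact Hausdorff space need not be $C^*$-embedded in its closure (e.g.\ $\{1/n\}$ in $[0,1]$, where $(-1)^n$ does not extend continuously), so ``orbit discrete $+$ injective on $G$'' does not by itself give injectivity of $\rho_{\ww}$ or surjectivity of $\pi_{\ww}$. The paper's proof of (i)$\Rightarrow$(iv) is precisely where the work happens: discreteness plus normality and countability upgrade the orbit to a strongly discrete set, extremal disconnectedness of $\BG$ then yields pairwise disjoint \emph{clopen} sets $U_g\ni g\cdot\ww$, and from these one explicitly builds, for any $f_1\in\ell^\infty(G)$, an $f_2$ with $\pi_{\ww}(f_2)=f_1$. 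Your fallback route (iii)$\Rightarrow$(iv), recovering $M_f$ as a limit of finite-rank combinations, is based on a false principle: $\A$ is only norm closed, an $\ell^\infty$ diagonal is not a norm limit of compacts, and a $C^*$-subalgebra of $\ell^\infty$ containing $c_0$ need not be all of $\ell^\infty$ (e.g.\ $c_0+\bb C 1$). So as written, (vi) and (vii), and likewise (iv) and (v), are never actually derived from (i)--(iii).

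A secondary, fixable flaw: at the end of (ii)$\Rightarrow$(i) you translate by $g_0^{-1}$ to get $1\in g_0^{-1}F$ and declare $g_0^{-1}U$ a non-recurrence neighborhood, with torsion-freeness ``preventing coincidences.'' But $g_0^{-1}F$ may contain elements other than $1$, so $g_0^{-1}U$ can still contain other orbit points; torsion-freeness does not rule this out. You can repair it by deleting the finitely many points $g\cdot\ww$, $g\in g_0^{-1}F\setminus\{1\}$ (using that $\BG$ is Hausdorff and stabilizers are trivial by Veech), or follow the paper: take a finite-rank diagonal projection $\chi_S\in\A$ with $S$ of minimal nonzero cardinality, intersect with its translates $\chi_{gS}$ to conclude $S$ is a finite subgroup --- this is exactly where the hypothesis that $G$ has no non-trivial finite subgroups enters --- hence $S=\{1\}$, $E_{1,1}=\pi_{\ww}(M_f)$, and $\{\alpha: |f(\alpha)|>1/2\}$ witnesses non-recurrence.
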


\begin{proof} {\em (i) implies (iii).} If $\ww$ is non-recurrent there is a clopen neighborhood $U$ of $\ww$ containing no other $g \cdot \ww, g \ne 1$. 
Hence, $\pi_{\ww}(\chi_U)$ is the rank one projection onto the span of $e_1 .$ 
 
Since $U_g \in \A$, we have the matrix units, $E_{g_i,g_j} = U_{g_i} \pi_{\ww}(\chi_U) U_{g_j}^{-1} = \pi_{\ww}(U_{g_i} \chi_U U_{g_j}^{-1}) \in \A$ for all $g_i, g_j \in G$ and hence
$\cl K\subseteq \cl A_{\ww}$

\medskip

{\em (ii) implies (i).} Suppose $\A$ contains a nonzero compact $K=\sum \pi_{\ww} (M_{f_g})U_g$ (formal sum). Then all $\pi_{\ww} (M_{f_g})$ are compact. So there is a non-zero $\pi_{\ww} (M_f)\in \cl K\cap \A$. 
Applying a sequence $p_n$ of polynomials to this, tending to the characteristic functions of some eigenspace you find a finite rank "diagonal" projection $Q=\chi_S$  in $\A$ with $S$ a finite set. Choose such a $Q$ with $S= \{ g_1,...,g_n \}$ of minimal non-zero cardinality. Conjugating $Q$ by $U_g$, we obtain another such projection in $\A$ corresponding to the set $g \cdot S$ and so we may assume that $1 \in S.$ Also, the product of two such projections is also in $\A$ and is the finite rank projection corresponding to $S \cap gS.$ But since $S$ is of minimal non-zero cardinality, either $S \cap g \cdot S$ is empty or $S \cap g \cdot S = S.$ Hence, for each $g \in S, g^{-1} \cdot S =S,$ and so $S$ is a finite subgroup. Thus, $S= \{ 1 \},$ and we have that $E_{1,1} \in \A.$ 

Hence, there exists $f \in C(\BG)$, such that
$E_{1,1}=\pi_{\ww} (M_f)$. Now $f(\ww)=1$ and $f(g \cdot \ww)=0$ for all $g\ne 1$.  Thus, $U= \{ \alpha: |f(\alpha)| > 1/2 \}$ is an open set containing $\ww$ but no other $g \cdot \ww$ and  so $\ww$ is non-recurrent.

\medskip

Clearly, (iii) implies (ii) and so (i), (ii), and (iii) are equivalent.

Also, it is clear that (v) implies (iv) implies (ii). Moreover, since $\pi_{\ww}$ is given by composition with $\rho_{\ww},$ we have that (iv) and (vi) are equivalent. Since, $\alpha + \ww = \rho_{\ww}(\alpha),$ it is also clear that $\ww$ is right cancellative is equivalent to $\rho_{\ww}$ being one-to-one.

\medskip

{\em (i) implies (iv).}  Since $\ww$ is non-recurrent there is a neighborhood $U_1$ of $\ww$ that contains no other point on its orbit. Hence, $g \cdot U_1$ is a neighborhood of $g \cdot \ww$ containing no other point on the orbit. Thus, non-recurrent is equivalent to the set $\{ g \cdot \ww \}$ being discrete. Now using the fact that $\BG$ is a compact, Hausdorrf space and hence normal,  and that the set of points is countable, one can choose neighborhoods, $V_g$ of $g \cdot \ww$ such that $V_g \cap V_h$ is empty for $g \ne h,$ i.e., the points are what is called {\em strongly discrete.} To recall the construction, first enumerate the points, $\ww_i = g_i \cdot \ww,$ then using normality, choose for each $i$ disjoint open sets $U_i, V_i$ such that $\ww_i \in U_i$ and the closure of $\{ \ww_j: j \ne i \}$ is contained in $V_i.$  Then set $W_1=U_1, W_i = U_i \cap V_1 \cdots \cap V_{i-1}, i \ge 2.$

Thus, using the fact that $\BG$ is extremally disconnected, we may choose disjoint clopen sets $U_g, g \in G,$ with $g \cdot \ww \in U_g.$ We then have that $\pi_{\ww}(\chi_{U_g})$ is the rank one projection onto the span of $e_g.$
Given any $f_1 \in \ell^{\infty}(G)$ define $f_2 \in \ell^{\infty}(G),$ by $f_2(h) = f_1(g)$ if and only if $h \in U_g,$ and when $h \notin \cup_{g \in G} U_g$ set $f_2(h) = \lambda$ where $\lambda \in \bb C$ is any number not in the closure of $\{ f_1(g): g \in G \}.$ Then $\pi_{\ww}(f_2) = f_1$ and so (iv) holds.

\medskip

{\em (iv) implies (v).}  Given any finite sum, $B= \sum_{j=1}^n M_{f_j} U_{g_j},$ we may pick continuous functions $h_j,$ such that $\pi_{\ww}( \sum_{j=1}^n M_{h_j} U_{g_j} = B.$ Thus, the range of the *-homomorphism, $\pi_{\ww}: \cl D \times_r G \to \cl D \times_r G$ is dense in $\cl D \times_r G$ and hence is onto.

\end{proof}

\begin{cor} Let $G$ be a countable group with no non-trivial finite subgroups. If $\ww_1, \ww_2 \in \BG$ are non-recurrent, then $\ww_1 \cdot \ww_2$ is non-recurrent.
\end{cor}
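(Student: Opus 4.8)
The plan is to reduce the statement to one of the equivalent conditions in Theorem~\ref{non-recurrent}. Since $G$ has no non-trivial finite subgroups, that theorem applies to \emph{every} point of $\BG$, so non-recurrence of $\ww_1$, of $\ww_2$, and of $\ww_1 \cdot \ww_2$ can each be detected by any of the conditions (i)--(vii); the cleanest for this purpose is condition (vii), right cancellativity in the semigroup $\BG$, together with its reformulation (vi) that $\rho_{\ww}$ is one-to-one (these are equivalent via $\alpha \cdot \ww = \rho_{\ww}(\alpha)$).

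Next I would invoke the Proposition that $\rho_{\ww} \circ \rho_{\alpha} = \rho_{\alpha \cdot \ww}$, applied with $\alpha = \ww_1$ and $\ww = \ww_2$, to obtain $\rho_{\ww_1 \cdot \ww_2} = \rho_{\ww_2} \circ \rho_{\ww_1}$. Because $\ww_1$ and $\ww_2$ are non-recurrent, conditions (i) $\Leftrightarrow$ (vi) of Theorem~\ref{non-recurrent} give that $\rho_{\ww_1}$ and $\rho_{\ww_2}$ are each one-to-one, and hence so is their composition $\rho_{\ww_1 \cdot \ww_2}$. Thus $\ww_1 \cdot \ww_2$ satisfies condition (vi), and by (vi) $\Leftrightarrow$ (i) it is non-recurrent. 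Alternatively, one can run the identical argument through condition (iv): $\pi_{\alpha} \circ \pi_{\ww} = \pi_{\alpha \cdot \ww}$ with $\alpha = \ww_1$, $\ww = \ww_2$ yields $\pi_{\ww_1 \cdot \ww_2} = \pi_{\ww_1} \circ \pi_{\ww_2}$, a composition of surjections and therefore surjective.

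I do not anticipate a genuine obstacle; the statement is essentially a corollary of the equivalences in Theorem~\ref{non-recurrent} combined with the composition rule for the maps $\rho$ (or $\pi$). The only point that requires a moment's care is matching the order of composition to the (non-abelian) semigroup convention $\alpha \cdot \ww = \rho_{\ww}(\alpha)$ when reading off the Proposition, and observing that the hypothesis that $G$ has no non-trivial finite subgroups is used only to guarantee that Theorem~\ref{non-recurrent} is available for all three points at once.
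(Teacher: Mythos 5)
Your proof is correct, and it is exactly the argument the paper intends: the corollary is stated without proof as an immediate consequence of Theorem~\ref{non-recurrent} (non-recurrence $\Leftrightarrow$ injectivity of $\rho_{\ww}$, equivalently right cancellativity, equivalently surjectivity of $\pi_{\ww}$) together with the composition rule $\rho_{\ww_2}\circ\rho_{\ww_1}=\rho_{\ww_1\cdot\ww_2}$. You also handle the one delicate point correctly, namely matching the order of composition to the convention $\alpha\cdot\ww=\rho_{\ww}(\alpha)$, and you rightly note that the hypothesis on finite subgroups is only needed so the theorem applies to all three points.
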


\begin{cor} Let $G$ be a countable group with no non-trivial finite subgroups and let $\ww \in \BG.$ Then $I(\A)= B(\ell^2(G))$ if and only if $\ww$ is non-recurrent.
\end{cor}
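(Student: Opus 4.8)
The plan is to read the corollary off from Theorem~\ref{non-recurrent} together with the rigidity characterization of the injective envelope. Recall (see \cite{Pa}) that for a unital C*-algebra $\cl A \subseteq B(\cl H)$ the assertion $I(\cl A) = B(\cl H)$ --- meaning that $B(\cl H)$, with its given inclusion of $\cl A$, is itself an injective envelope of $\cl A$ --- is equivalent to $\mathrm{id}_{B(\cl H)}$ being a minimal $\cl A$-projection, and, by rigidity of the injective envelope, this holds if and only if the only completely positive map $\phi : B(\cl H) \to B(\cl H)$ that fixes $\cl A$ elementwise is the identity. So it suffices to show that such a non-identity completely positive self-map fixing $\A$ exists precisely when $\ww$ is recurrent, and I would treat the two implications separately, in each case reading off the needed structural information about $\A$ from Theorem~\ref{non-recurrent}.

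Suppose first that $\ww$ is non-recurrent. Then Theorem~\ref{non-recurrent} gives $\cl K \subseteq \A$ (indeed $\A = \cl D \times_r G$). Let $\phi : B(\ell^2(G)) \to B(\ell^2(G))$ be completely positive and fix $\A$ elementwise; since $\A$ is unital, $\phi$ is unital, and by Choi's multiplicative domain theorem \cite{Ch} the algebra $\A$ --- hence $\cl K$ --- lies in the multiplicative domain of $\phi$, so $\phi$ is a $\cl K$-bimodule map. Applying this to the rank-one projections $E_{g,g},E_{h,h}\in\cl K$ gives $\phi(E_{g,g}XE_{h,h}) = E_{g,g}\phi(X)E_{h,h}$, while $E_{g,g}XE_{h,h} = \langle Xe_h,e_g\rangle E_{g,h}$ is a scalar multiple of a matrix unit and is therefore fixed by $\phi$; comparing the two expressions yields $\langle\phi(X)e_h,e_g\rangle = \langle Xe_h,e_g\rangle$ for all $g,h\in G$, so $\phi = \mathrm{id}$. (This is exactly the observation recorded in the Remark following Proposition~\ref{condition}.) Hence $\mathrm{id}_{B(\ell^2(G))}$ is the unique $\A$-projection and $I(\A) = B(\ell^2(G))$.

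Conversely, suppose $\ww$ is not non-recurrent. By Theorem~\ref{non-recurrent} (the negation of condition (ii)) we have $\cl K \cap \A = (0)$, so the Calkin quotient map $q : B(\ell^2(G)) \to \cl Q := B(\ell^2(G))/\cl K$ restricts on $\A$ to an injective, hence completely isometric, unital $*$-homomorphism onto $q(\A)\subseteq\cl Q$. Its inverse, regarded as a unital completely positive map $q(\A) \to \A \subseteq B(\ell^2(G))$, extends --- by Arveson's extension theorem and the injectivity of $B(\ell^2(G))$, see \cite{Pa} --- to a unital completely positive $\sigma : \cl Q \to B(\ell^2(G))$. Then $\phi := \sigma \circ q$ is a unital completely positive self-map of $B(\ell^2(G))$ that fixes $\A$ elementwise but annihilates $\cl K$, so $\phi \neq \mathrm{id}$. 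By the rigidity criterion of the first paragraph this forces $I(\A) \neq B(\ell^2(G))$, which completes the proof.

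I expect essentially all of the content to lie in this converse direction, and more precisely in the single point that the hypothesis $\cl K\cap\A=(0)$ furnished by Theorem~\ref{non-recurrent} is exactly what allows the identity map of $\A$ to be lifted through the Calkin algebra and then, via Arveson's theorem, spread to a non-identity completely positive map fixing $\A$. The ``$\ww$ non-recurrent'' implication and the translation into a statement about minimal $\A$-projections are routine once Theorem~\ref{non-recurrent} and the standard theory of injective envelopes are in hand.
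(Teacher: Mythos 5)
Your proof is correct, and the forward direction is essentially the paper's: once Theorem~\ref{non-recurrent} puts $\cl K$ inside $\A$, any completely positive map fixing $\A$ fixes the matrix units and is forced to be the identity (the paper phrases this via Schur multipliers, you via Choi's multiplicative domain --- the same computation), and then the identity is the unique minimal $\A$-projection, so $B(\ell^2(G))$ is the injective envelope. Where you genuinely diverge is the converse. The paper argues by \emph{essentiality}: since $\cl K \cap \A = (0)$, the Calkin quotient $q$ is completely isometric on $\A$, and because $I(\A)$ is an essential extension, $q$ composed with any embedding of $I(\A)$ into $B(\ell^2(G))$ must remain isometric on all of $I(\A)$; as $q$ kills the compacts, $I(\A)$ cannot be all of $B(\ell^2(G))$. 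You instead argue by \emph{rigidity}: you invert $q|_{\A}$ on $q(\A)$, extend by Arveson's extension theorem to a UCP map $\sigma:\cl Q \to B(\ell^2(G))$, and obtain $\phi=\sigma\circ q$, a non-identity completely positive map fixing $\A$ elementwise, which is incompatible with $B(\ell^2(G))$ being an injective envelope of $\A$. The two routes are dual formulations of the same fact about injective envelopes and both hinge on the same input from Theorem~\ref{non-recurrent}; the paper's is shorter and avoids Arveson's theorem, while yours has the mild advantage of producing an explicit $\A$-fixing completely positive map that annihilates $\cl K$, which ties in directly with the fixed-set $\cl F(\A)$ discussion surrounding Corollary~\ref{condition2}.
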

\begin{proof} If $\ww$ is non-recurrent, then $\A$ contains all the compacts and hence any completely positive map that fixes $\A$ fixes all the compacts and hence is the identity map. To see this note that if it fixes every diagonal matrix unit, then it is a Schur product map, which fixes every matrix unit and so is the identity map.

Conversely, if $\ww$ is not non-recurrent, then $\A$ contains no
non-zero compacts and hence the quotient map to the Calkin algebra is
an isometry on $\A$. But the injective envelope is an essential extension of $\A$ and, hence, any embedding of $I(\A)$ into $B(\ell^2(G))$, composed with the quotient map must also be an isometry on  
$I(\A)$and hence, $I(\A) \ne B(\ell^2(G)).$ 
\end{proof}

The above result shows that non-recurrent points all satisfy the
condition of Corollary~\ref{condition2} that is necessary for
$s_{\ww}$ to have a unique extension. This lends credence to the
following conjectures.

\begin{conj} We conjecture that if $\ww \in \BG$ is non-recurrent, then
  the state $s_{\ww}$ extends uniquely to $B(\ell^2(G)).$
\end{conj}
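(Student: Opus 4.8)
The plan is to reduce the conjecture, through the corollary characterising uniqueness of extensions, to a rigidity statement about the reduced crossed product inside the Calkin algebra, and then attack that. We may assume $G$ has no non-trivial finite subgroup, so that Theorem~\ref{non-recurrent} applies: a non-recurrent $\ww$ has $\cl A_{\ww}=\cl D\times_r G\supseteq\cl K$. If $\ww\in G$ then $s_{\ww}$ is a vector state and extends uniquely, so assume $\ww\in G^{*}$. For $\ww$ in the corona $\psi_{\ww}$ annihilates every operator all of whose diagonals lie in $c_{0}$, hence every compact operator, so $\pi_{\ww}\times\lambda=\psi_{\ww}|_{\cl D\times_r G}$ annihilates $\cl K$; therefore every completely positive extension $\psi$ of $\pi_{\ww}\times\lambda$ also kills $\cl K$ and so factors through $\cl Q=B(\ell^{2}(G))/\cl K$. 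Since $\cl A_{\ww}=\cl D\times_r G$, the resulting bijection between completely positive extensions of $\pi_{\ww}\times\lambda$ and those of the induced map $\overline{\pi_{\ww}\times\lambda}$ shows that the conjecture is equivalent to the assertion that the representation of the quotient crossed product $(\cl D\times_r G)/\cl K$ --- which is a crossed product over the corona $G^{*}$ --- has a unique completely positive extension from $\cl Q$ to $B(\ell^{2}(G))$.

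Two concrete routes for this step suggest themselves. The first is a direct paving argument: fixing a clopen neighbourhood $U$ of $\ww$ meeting the orbit only in $\ww$ and putting $W=U\cap G$, one has $\pi_{\ww}(\chi_{gW})=E_{g,g}$, so Choi's multiplicative domain theorem forces $\psi(P_{gW}XP_{hW})=\langle\psi(X)e_{h},e_{g}\rangle\,E_{g,h}$ for every completely positive extension $\psi$ and every $X$; by the identity $\cl U(\pi_{\ww}\times\lambda)=\bigcap_{g,h}U_{g}\cl U(\ww)U_{h}$ this reduces the conjecture to showing $\cl U(\ww)=B(\ell^{2}(G))$, i.e.\ that every self-adjoint $X$ with $E_{0}(X)=0$ is pavable along sets in the ultrafilter $\ww$, and one would then try to exploit the fact that the orbit of a non-recurrent point is strongly discrete (so that disjoint clopen separating sets are available) to produce such pavings by an averaging argument in the spirit of the minimal-idempotent theorem above. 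The second route is representation-theoretic: $\pi_{\ww}\times\lambda$ is an irreducible representation of $\cl D\times_r G$ that annihilates $\cl K$, hence descends to an irreducible representation of $(\cl D\times_r G)/\cl K$, and one wants to show this representation has the unique-extension property relative to its inclusion into $\cl Q$ --- a ``unique pseudo-expectation'' type statement for the corona crossed product.

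The hard part, on either route, is the same: non-recurrence is a hypothesis about the $G$-action on $\BG$ and supplies no control over the off-diagonal structure of an arbitrary operator, and the compressions that the orbit dynamics make available carry no a priori smallness, so there is no visible mechanism by which these dynamical properties force an arbitrary $X$ to be pavable. As it stands, the required uniqueness inside the Calkin algebra appears to be no easier than the Kadison-Singer problem itself, which is why only a conjecture is made. The most promising way to make partial progress seems to be either to restrict to $X\in VN(G)$, where $\psi_{\ww}(X)=X$ automatically and the averaging argument has a better chance of going through, or to show that every non-recurrent ultrafilter contains a set witnessing the rareness or $\delta$-stability already handled in this paper, which would deduce the conjecture in those cases from the results proved above.
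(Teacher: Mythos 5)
The statement you are addressing is a conjecture in the paper: the author proves only a necessary condition for uniqueness (non-recurrent points satisfy $VN(G)\subseteq \cl F(\A)$, via Theorem~\ref{non-recurrent} and Corollary~\ref{condition2}, indeed $I(\A)=B(\ell^2(G))$), and offers no proof of the conjectured unique extension itself. Your proposal does not close that gap, and you say so yourself. The individual reductions you set up are mostly sound: for $\ww\in G^*$ every completely positive extension of $\pi_{\ww}\times\lambda$ vanishes on $\cl K$ (since $\cl K\subseteq\cl D\times_r G$ and the regular extension kills all operators with $c_0$ diagonals), so the extension problem does pass to the Calkin algebra; and the compression identity $\psi(P_{gW}XP_{hW})=\langle\psi(X)e_h,e_g\rangle E_{g,h}$ follows correctly from Choi's multiplicative domain theorem because $\pi_{\ww}(\chi_{gU})=E_{g,g}$ for a separating clopen $U$. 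But none of these moves reduces the difficulty: the assertion that the conjecture ``is equivalent to'' $\cl U(\ww)=B(\ell^2(G))$ is the definition of unique extension, not a reduction, and the passage to the corona crossed product merely restates the same uniqueness question in the quotient. The one genuinely missing ingredient --- a mechanism by which non-recurrence of $\ww$ forces pavability of an arbitrary self-adjoint $X$ with $E_0(X)=0$ along sets of the ultrafilter --- is exactly the open content of the conjecture, and your text explicitly concedes it is absent.

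Two further cautions. First, your opening ``we may assume $G$ has no non-trivial finite subgroups'' is not justified: non-recurrence is a property of the $G$-action on $\BG$, and relabelling the basis by a different (torsion-free) group changes the action, with no argument that the point remains non-recurrent; Theorem~\ref{non-recurrent} is only stated under that hypothesis, so at best you would be proving a restricted form of the conjecture. Second, your suggested fallback of showing that a non-recurrent ultrafilter is automatically rare or $\delta$-stable cannot work in general: the paper's own results only go in the other direction (rare and $\delta$-stable points are wandering, respectively non-recurrent), and non-recurrence is a strictly weaker dynamical property, so this route would again only recover cases already covered by Reid's theorem rather than the conjecture. In short, the proposal is an honest reformulation plus a research plan, not a proof, and it does not go beyond what the paper itself establishes.
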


\begin{conj} Let $G$ be a countable group with no finite subgroups. If
  $\ww \in \BG$ is non-recurrent, then we conjecture that $\cl R(\psi_{\ww}) =
  B(\ell^2(G)).$
\end{conj}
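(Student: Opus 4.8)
The plan is to establish the stronger fact that $\psi_{\ww}$ admits a unital completely positive right inverse; this of course forces $\cl R(\psi_{\ww}) = B(\ell^2(G))$. Throughout I use Theorem~\ref{non-recurrent}: since $\ww$ is non-recurrent and $G$ has no finite subgroups, $\cl A_{\ww} = \cl D \times_r G$ (so $\cl K \subseteq \cl A_{\ww}$), $\pi_{\ww}:\cl D \to \cl D$ is onto, and the proof of that theorem produces pairwise disjoint clopen sets $U_g \subseteq \BG$ with $g\cdot\ww \in U_g$ and $\pi_{\ww}(\chi_{U_g}) = E_{g,g}$; set $A_g = U_g \cap G$. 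By the second displayed theorem above, $\psi_{\ww}$ restricts on $\cl D \times_r G$ to the \emph{surjective} $*$-endomorphism $\theta := \pi_{\ww}\times\lambda$, whose kernel contains $\cl K$ (every matrix unit is killed since $\delta_g$ vanishes on the corona, hence on $\overline{G\cdot\ww}$).

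\emph{The reduction.} Suppose one can produce a unital completely positive map $\Sigma : \cl D \times_r G \to \cl D \times_r G$ with $\theta \circ \Sigma = \mathrm{id}$. Extend $\Sigma$ to a unital completely positive $\Phi : B(\ell^2(G)) \to B(\ell^2(G))$ by Arveson's extension theorem. Then for $a \in \cl D \times_r G$ we have $\psi_{\ww}(\Phi(a)) = \psi_{\ww}(\Sigma(a)) = \theta(\Sigma(a)) = a$, using that $\Sigma(a)$ lies in $\cl D \times_r G$, where $\psi_{\ww}$ agrees with $\theta$. Hence $\psi_{\ww}\circ\Phi$ is a unital completely positive map on $B(\ell^2(G))$ that fixes $\cl D \times_r G$ elementwise, and since $\cl D \times_r G$ contains the compacts, any such map is the identity (fixing the diagonal matrix units makes it a Schur multiplier, which then fixes every matrix unit). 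Therefore $\psi_{\ww}$ is onto.

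\emph{Producing $\Sigma$.} This is the heart of the matter, and it is where I expect the difficulty to lie. One natural attempt fails outright: $\pi_{\ww}$ \emph{does} admit a $*$-homomorphic section, namely $\iota(f) = f\circ\phi$, where $\phi : G \to G$ collapses each $A_g$ to $g$ (and sends the complement of $\bigcup_g A_g$ to $1$) — indeed $\rho_{\ww}$ is a homeomorphism of $\BG$ onto $\overline{G\cdot\ww}$ and $\beta\phi$ restricts on $\overline{G\cdot\ww}$ to $\rho_{\ww}^{-1}$, so $\pi_{\ww}\circ\iota = \mathrm{id}$ — but the coefficient-wise map $\sum M_{f_g}U_g \mapsto \sum M_{\iota(f_g)}U_g$ is multiplicative on $\cl D \times_r G$ only when $\iota$ is $G$-equivariant, and an elementary semigroup computation shows $\pi_{\ww}$ has no $G$-equivariant section once $\ww$ is in the corona (an equivariant $\phi$ must be right translation $k\mapsto kx$, whence $\pi_{\ww}\circ\iota = \pi_{\ww\cdot x}\ne\mathrm{id}$ since $\ww\cdot x$ again lies in the ideal $G^{*}$). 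So $\Sigma$, if it exists, must genuinely mix Fourier coefficients. Two routes suggest themselves. Abstractly, a section of $\theta$ is a completely positive lifting of $\mathrm{id}_{\cl D\times_r G}$ along the quotient by $\ker\theta$, and such a lifting is supplied by the Choi--Effros lifting theorem provided $\cl D \times_r G = \ell^{\infty}(G)\times_r G$ is nuclear — which holds exactly when $G$ is exact — the one loose end being that Choi--Effros is usually stated under a separability hypothesis that $\ell^{\infty}(G)$ violates. Concretely, given a bounded $Y \sim \sum_g D_g U_g$ with $D_g = \mathrm{diag}(d_g) \in \cl D$, one lifts coefficient by coefficient: choose $f_g \in \ell^{\infty}(G)$ with $\pi_{\ww}(M_{f_g}) = D_g$, i.e.\ any bounded function on $G$ whose ultrafilter limit along $h\cdot\ww$ equals $d_g(h)$ for every $h$, and ask that the $f_g$ be chosen jointly so that $X \sim \sum_g M_{f_g}U_g$ is bounded.

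The main obstacle is thus the construction of $\Sigma$, and I expect it to require either a non-separable sharpening of the Choi--Effros lifting theorem in the nuclear case (which would still leave non-exact $G$ untreated) or a genuinely quantitative argument. In the concrete approach the ``naive'' lift — $f_g$ supported on $\bigcup_h A_h$ and constant $= d_g(h)$ on $A_h$ — satisfies $\|M_{f_g}\| = \|D_g\|$ but need not give a bounded $X$, the trouble being off-diagonal collisions among the initial spaces $g^{-1}A_g$ of the resulting partial-isometry-like blocks. The available freedom is precisely that each $f_g$ need only converge to $d_g(h)$ along the ultrafilter $h\cdot\ww$ rather than be constant on $A_h$, and the task is to exploit the strong discreteness of the orbit $\{g\cdot\ww\}$ — a separation property stronger than bare non-recurrence but one that non-recurrence was shown to entail inside the proof of Theorem~\ref{non-recurrent} — to spread the $f_g$ out so that these collisions are uniformly controlled. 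Everything else in the argument is soft; this norm estimate is where the real work lies.
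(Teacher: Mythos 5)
This statement is posed in the paper only as a conjecture --- no proof of it appears anywhere in the text --- so there is no argument of the author's to compare yours against; the only question is whether your proposal actually settles it, and it does not. The reduction you give is sound as far as it goes: if a unital completely positive $\Sigma:\cl D\times_r G\to \cl D\times_r G$ with $(\pi_{\ww}\times\lambda)\circ\Sigma=\mathrm{id}$ existed, then Arveson extension together with the paper's observation that any completely positive map fixing $\cl D\times_r G$ elementwise must be the identity (because $\cl D\times_r G$ contains the compacts) would indeed give $\cl R(\psi_{\ww})=B(\ell^2(G))$; and you correctly invoke Theorem~\ref{non-recurrent} to know that for non-recurrent $\ww$ in a group with no non-trivial finite subgroups $\pi_{\ww}\times\lambda$ maps $\cl D\times_r G$ onto itself, so that asking for a section even makes sense.

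The genuine gap is the one you flag yourself: $\Sigma$ is never constructed, and neither route you sketch closes it. The Choi--Effros lifting theorem requires the algebra being lifted to be separable, and $\ell^{\infty}(G)\times_r G$ is not; moreover its nuclearity fails for non-exact $G$, and no nonseparable substitute is supplied, so even for $G=\bb Z$ this route is not an argument but a hope. The concrete coefficientwise lifting is exactly where the difficulty of the conjecture lives: choosing $f_g$ with $\pi_{\ww}(M_{f_g})=D_g$ so that $\sum_g M_{f_g}U_g$ remains bounded is essentially the Problem the paper itself poses (whether every bounded $Y\sim\sum_g D_gU_g$ with $D_g\in\pi_{\ww}(\cl D)$ lies in $\cl R(\psi_{\ww})$), and you give no norm estimate exploiting the strong discreteness of the orbit. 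In short, you have reformulated the conjecture as a lifting problem of comparable strength --- a reasonable and possibly useful reformulation --- but you have not proved the statement; it remains open.
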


Assuming the last conjecture, one can prove that if $\ww$ is non-recurrent and $s_{\alpha}$ has a unique extension, then $s_{\alpha \cdot \ww}$ has a unique extension. So these conjectures might shed some light on the algebraic properties of points with unique extensions.


\section{Dynamical Properties of Ultrafilters}

We now examine the dynamical properties of various classes of ultrafilters that have been studied in relation to the Kadison-Singer problem. 

An ultrafilter $\beta$ on a countable set $N$ is called {\bf
  selective,}\cite{HS} if given any partition of $N = \cup P_i$ into subsets,
  either $P_i$ is in the ultrafilter for some $i$ or there exists a set $B$ in the ultrafilter, such that for every $i, B \cap P_i$ has cardinality at most $1.$  An ultrafilter is called {\bf rare}\cite{C} if for each partition $N= \cup P_i$ into finite subsets, there exsits a set $B$ in the ultrafilter such that for every $i, B \cap P_i$ has cardinality at most $1$. 
An ultrafilter is called a {\bf $\delta$-stable,}\cite{C} if for each
  partition $N= \cup P_i$, into sets of arbitrary sizes, then either
  one of the $P_i$'s is in the ultrafilter or there exists a set $B$
  in the ultrafilter such that for every $i, B \cap P_i$ is
  finite. Note that an ultrafilter is selective if and only if it is
  rare and $\delta$-stable.

Finally, a point in a topological space is called a {\bf
  P-point}\cite{Wa} if every $G_{\delta}$ that contains the point
  contains an  open neighborhood of the point. 

Note that if $X$ is a compact, Hausdorff space, $x \in X$ is a P-point
and $f \in C(X)$,  then $\{ y \in X: f(y) = f(x) \}$ is a $G_{\delta}$
set and hence contains an open neighborhood of $x$! Thus, this definition of P-point is antithetical to the concept of p-point that appears in function theory.  
Choquet\cite[Proposition 1]{C} proves that for any discrete set $N,\ww$ is a
$\delta$-stable if and only if $\ww$ is a P-point in the corona
$\beta N \setminus N.$ For this reason the term $\delta$-stable
has fallen into disuse and such ultrafilters are, generally, called P-points
without reference to the topological space.

Finally, we recall a stronger notion than non-recurrent. A point $\ww$ in a dynamical system is {\bf wandering}, if there exists a neighborhood, $U$ of $\ww$ such that $g \cdot U \cap h \cdot U$ is empty for any $g \ne h$ in the group.

In an earlier version of this paper, we proved that rare ultrafilters are wandering points in $\bb N$ and $\bb Z$ and conjectured that they were wandering in every group. Since then, Ken Davidson has verified this conjecture. We present our proof of the case of $\bb N$ and Davidson's proof for general groups below.

\begin{prop} Let $\ww \in \beta \bb N$ be rare, then $\ww$ is wandering in the corona $\bb N^*.$
\end{prop}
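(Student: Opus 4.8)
The plan is to work directly with the ultrafilter/filter picture. A point $\ww \in \bb N^*$ is wandering in the corona $\bb N^*$ precisely when there is a set $A \in \ww$ such that the clopen sets $\overline{g + A} \cap \bb N^*$, for $g \in \bb N$, are pairwise disjoint; equivalently (since these are closures of $g + A$ and two clopen sets of the corona coming from infinite subsets $B, C \subseteq \bb N$ are disjoint iff $B \cap C$ is finite), we need a set $A \in \ww$ with the property that $(g + A) \cap (h + A)$ is finite whenever $g \ne h$ in $\bb N$. So the whole problem reduces to: \emph{produce $A$ in the ultrafilter such that all the translates $g + A$ are "almost disjoint."} Note that because $\bb N$ is a group-like semigroup here it suffices to control $(g+A) \cap A$ for $g \ge 1$ — if this is finite for all $g \ge 1$, then $(g+A)\cap(h+A) = h + ((g-h+A)\cap A)$ (for $g > h$) is finite too.

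Next I would set up the partition to which rareness is applied. The obstruction to almost-disjointness of translates is the existence of three-term configurations $a, a', a+g = a' \in A$ with small gap $g$ — more precisely, for each fixed $g$ we want $A$ to contain only finitely many $a$ with $a + g \in A$. The standard trick: build a partition $\bb N = \bigcup_i P_i$ into finite intervals $P_i = [n_i, n_{i+1})$ with lengths $n_{i+1} - n_i \to \infty$, so that rareness yields $A \in \ww$ meeting each $P_i$ in at most one point. Then $A = \{a_1 < a_2 < \cdots\}$ with $a_i \in P_i$, so the consecutive gaps $a_{i+1} - a_i \ge n_{i+1} - n_i$ (roughly — at least $n_{i+1} - n_i - |P_i|$; one must choose the intervals so that the gap to the next interval dominates) tend to infinity. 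Once the consecutive gaps of $A$ tend to $\infty$, fix $g \ge 1$: for all but finitely many $i$ the gap $a_{i+1} - a_i$ exceeds $g$, and then $a_i + g$ lies strictly between $a_i$ and $a_{i+1}$, hence $a_i + g \notin A$. Thus $(g + A) \cap A$ is finite for every $g \ge 1$, which is what we needed.

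The one place requiring care — and the main obstacle — is the bookkeeping that turns "at most one point of $A$ per interval $P_i$" into "consecutive gaps of $A$ go to infinity." If $a_i$ sits at the \emph{right} end of $P_i$ and $a_{i+1}$ at the \emph{left} end of $P_{i+1}$, the gap $a_{i+1} - a_i$ could be as small as $1$ regardless of how long the intervals are. The fix is to use a coarser partition where one \emph{discards} or \emph{groups} intervals: e.g. take blocks $Q_k$ that are unions of consecutive $P_i$'s, arrange that $A \in \ww$ hits each $Q_k$ at most once (apply rareness to the $Q_k$-partition directly, with $|Q_k| \to \infty$), and then note $A$ has at most one point in each $Q_k$, so between a point in $Q_k$ and the next point in $Q_{k'}$ with $k' > k$ there is at least one full empty block $Q_j$ with $|Q_j| \to \infty$ — wait, that still only gives a gap of size $\ge |Q_{k+1}|$ if the points are in \emph{consecutive} blocks, which is exactly what we want since $|Q_k| \to \infty$. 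So: choose $|Q_k| = k$, get $A = \{a_k\}$ with $a_k \in Q_{i_k}$ and $i_1 < i_2 < \cdots$; then $a_{k+1} - a_k \ge |Q_{i_k + 1}| \cdot \mathbf{1}[i_{k+1} > i_k] \ge $ (for $k$ large) something tending to $\infty$ since $i_k \to \infty$. I would present exactly this cleaned-up version: define the blocks, invoke the definition of rare ultrafilter once, extract the gap-growth, and conclude finiteness of $(g+A)\cap A$ hence of all $(g+A)\cap(h+A)$, hence pairwise disjointness of the clopen sets $\overline{g+A}$ in $\bb N^*$, which is the definition of $\ww$ being wandering.
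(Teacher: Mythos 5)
Your overall strategy is the right one and is essentially the paper's: apply rareness to a partition of $\bb N$ into finite blocks of growing size, extract a set in the ultrafilter whose translates are pairwise almost disjoint, and pass to the corresponding clopen neighborhood in the corona. Your reduction of ``wandering'' to ``$(g+A)\cap A$ finite for every $g\ge 1$'' is also fine. But the step you yourself flag as the main obstacle is exactly where the argument breaks, and your proposed fix does not repair it. If $A$ meets each block $Q_k$ in at most one point, two successive points of $A$ may lie in \emph{consecutive} blocks, one at the right end of $Q_i$ and the next at the left end of $Q_{i+1}$, so the gap can equal $1$ no matter how long the blocks are. Your claimed estimate $a_{k+1}-a_k\ge |Q_{i_k+1}|$ when $i_{k+1}>i_k$ is therefore false, and no choice of block lengths can help: grouping alone never controls this boundary effect, so the conclusion that the consecutive gaps of $A$ tend to infinity (and hence that $(g+A)\cap A$ is finite) is not established.

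The paper closes precisely this gap with one extra appeal to the ultrafilter: after partitioning $\bb N$ into finite blocks of growing size, it lets $\sigma$ be the union of every \emph{other} block, notes that either $\sigma$ or its complement lies in $\ww$, and then intersects that half with the rare set $B$ to get $\gamma=\sigma\cap B\in\ww$. Consecutive points of $\gamma$ are now separated by at least one entire omitted block, whose length tends to infinity, which gives the quantitative bound $|\gamma\cap(\gamma+n)|\le 2n$ for all $n$ and hence finiteness of the intersections of translates, so the clopen set of $\gamma$ meets the corona in a wandering neighborhood. (Davidson's proof for general groups in the paper uses the same device with the even- and odd-indexed sets $E$ and $O$.) A repair in the spirit of your write-up would be to apply rareness to two staggered interval partitions and intersect the two resulting sets, but as written your proof has a genuine hole at the gap-growth step, and the missing idea is exactly the ``keep only every other block, using that $\sigma$ or its complement is in $\ww$'' move.
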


\begin{proof} Let $\ww$ be a rare ultrafilter on $\bb N.$ Consider the following partition of $ \bb N$
into finite sets:

\begin{multline*} \bb N=  \{  1\}\cup\{  2\}\cup\{ 3, 
4\}\cup\{ 5, 6\}\cup\{ 7, 8, 9\}\cup \\ 
\cup \{ 10, 11, 12\}\cup\{ 13, 14, 15, 16\}\dots
\end{multline*}

Let $$\sigma=\{ 1\}\cup\{ 3, 4\}\cup\{ 7, 8, 9\}\cup\{ 13, 14, 15, 16\}\dots$$
(half the sets in the partition) and let $\sigma_j$ denote the j-th subset in this list.

By general properties of ultrafilters, either $\sigma$ or its complement is in $\ww$. We assume, without loss of generality, that $\sigma\in\ww$.

Since $\ww$ is rare, there exists $B\in \ww$ such that $B\cap \sigma_j$ has at most one element for all $j$.

Define $\gamma=\sigma \cap B\in \ww$, then
 $|\gamma \cap (\gamma+n) |\le 2n \quad \forall n\in \bb N$ which can be seen by looking at how the $\sigma_j$'s in $\sigma$ are spread out.

Thus, if $U$ denotes the clopen set in $\beta \bb N$ corresponding to $\gamma$, then $(n+U) \cap (m+U) \subset \bb N$ and is finite. Thus, the relatively open set $U \cap \bb N^*$ is wandering in $\bb N^*.$

\end{proof}

We now present Davidson's proof for general groups.

\begin{thm} Let $G$ be a countable, discrete group and let $\ww \in \BG$ be a rare ultrafilter, then $\ww$ is wandering in the corona $G^*,$ and, hence $\cl A_{\ww} = \cl D \times_r G.$
\end{thm}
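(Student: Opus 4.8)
The plan is to mimic the structure of the $\bb N$-case while replacing the ad hoc counting argument (which exploited the total order on $\bb N$) with a more robust combinatorial device. Recall that $\ww$ wandering means there is a clopen neighbourhood $U$ of $\ww$ in $\BG$ with $g\cdot U \cap h\cdot U \subseteq G$ (equivalently, $(g^{-1}h)\cdot U \cap U$ is finite) for all $g\ne h$ in $G$; by translating it suffices to find $A$ in the ultrafilter $\ww$ such that $gA \cap A$ is finite for every $g\ne 1$. Since $G$ is countable, enumerate $G\setminus\{1\} = \{g_1,g_2,\dots\}$. The strategy is to build, for the rare ultrafilter $\ww$, a set $A\in\ww$ that is ``spread out'' enough that each translate $g_iA$ meets $A$ in only finitely many points — and rareness is exactly the hypothesis that lets one thin a given set in the ultrafilter down to one hitting each block of a prescribed finite partition at most once.

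The key step is to choose the right partition of $G$ into finite sets. First I would pick, inductively, an increasing sequence of finite subsets $F_1 \subseteq F_2 \subseteq \cdots$ with $\bigcup_n F_n = G$ and with $F_n$ containing $\{g_1,\dots,g_n\}$ together with their inverses; a ``Følner-free'' Lindenstrauss-type exhaustion is not needed, just any exhaustion by finite sets. Then I would partition $G$ into finite blocks $P_1, P_2, \dots$ chosen greedily so that consecutive blocks are ``$F_n$-separated'': having chosen $P_1,\dots,P_{k-1}$ with union $Q_{k-1}$, let $P_k$ be a finite set disjoint from $F_k \cdot Q_{k-1}$ and from $Q_{k-1}\cdot F_k$ (possible since those are finite and $G$ is infinite), large enough that the $P_k$ eventually exhaust $G$ — e.g. throw in the first element of $G$ not yet covered. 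Apply rareness: there is $A\in\ww$ with $|A\cap P_k|\le 1$ for all $k$. Now for a fixed $g_i$, if $x,y\in A$ with $y = g_i x$ and $x\in P_k$, $y\in P_\ell$ with, say, $k\le\ell$: the separation condition forces $\ell \le$ (some bound depending on $i$), because once $\ell > i$ the block $P_\ell$ was chosen disjoint from $F_\ell\cdot Q_{\ell-1}\supseteq \{g_i\}\cdot P_k$, contradicting $g_i x \in P_\ell$. Hence all such $y$ lie in the finite set $Q_{N(i)}$ for some $N(i)$, so $g_i A \cap A$ is finite. This gives the wandering property; the final clause $\cl A_{\ww} = \cl D\times_r G$ is then immediate, since wandering implies non-recurrent and Theorem~\ref{non-recurrent} gives the equivalence with $\cl A_{\ww} = \cl D\times_r G$ (using that a rare ultrafilter point lies in the corona, where $G$ acts freely, and that $G$ having no finite subgroups is not actually needed here because the non-recurrent $\Leftrightarrow$ $\cl A_\ww = \cl D\times_r G$ direction goes through the matrix-unit argument directly).

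The main obstacle is the asymmetry of the problem in the non-abelian setting: I need $g_i A \cap A$ finite for \emph{both} $g_i$ and $g_i^{-1}$ uniformly, and more importantly the ``block separation'' must be arranged so that a single greedy construction handles \emph{all} $g_i$ simultaneously with the per-$g_i$ bound only affecting \emph{how far down the list of blocks} the intersection can reach, not \emph{whether} it is finite. Including $F_k \supseteq \{g_1,\dots,g_k\}^{\pm 1}$ and separating on both sides ($F_k Q_{k-1}$ and $Q_{k-1}F_k$) is what buys this: for any fixed $i$, once $k > i$ every new block avoids $g_i^{\pm 1}$ times everything built so far, so only the first $i$-ish blocks can contribute to $g_iA\cap A$. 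I expect the bookkeeping to verify ``$g_iA\cap A \subseteq Q_{\,i}$ (roughly)'' to be the only genuinely delicate point; everything else is the same skeleton as the $\beta\bb N$ proof. One should also double-check that the greedy exhaustion genuinely covers $G$ — handled by always adjoining the least-indexed uncovered group element to the current block — and that $|A\cap P_k|\le 1$ combined with finiteness of each contributing block count really does yield a \emph{finite} intersection, which it does since a finite union of blocks is finite.
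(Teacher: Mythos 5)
Your reduction (find $A\in\ww$ with $gA\cap A$ finite for all $g\ne 1$), your use of rareness, and your handling of the final clause (only the non-recurrent $\Rightarrow$ $\cl K\subseteq\cl A_{\ww}\Rightarrow \cl A_{\ww}=\cl D\times_r G$ direction is needed, and that direction does not use the no-finite-subgroups hypothesis) are all fine. The gap is in the greedy construction of the partition: the blocks you describe cannot exhaust $G$. You require $P_k$ to be disjoint from $F_k\cdot Q_{k-1}$, and the forbidden regions are increasing in $k$ (since $F_k\subseteq F_{k+1}$ and $Q_{k-1}\subseteq Q_k$), so any element of $F_k\cdot Q_{k-1}\setminus Q_{k-1}$ is permanently excluded from every later block and is never covered; in particular ``throw in the first uncovered element'' conflicts with the separation requirement. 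Moreover the defect is not repairable by a cleverer choice of blocks: already for $G=\bb Z$ and $g=1$, in \emph{any} partition of $\bb Z$ into finite blocks, for every $k$ the element $\max(P_k)+1$ lies in a block different from $P_k$, so there are infinitely many pairs $k\ne\ell$ with $(1+P_k)\cap P_\ell\ne\emptyset$, with unboundedly large indices. Hence no partition has the property your argument needs (that for each fixed $g_i$, translates of blocks meet \emph{other} blocks only when both indices are bounded by a constant depending on $i$), and the conclusion ``$g_iA\cap A\subseteq Q_i$'' cannot be reached this way: rareness alone, applied to a single partition, does not prevent the selected points from sitting near block boundaries and being matched up by a fixed translation infinitely often.

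The missing idea, which is exactly how the paper's proof (Davidson's) proceeds, is to settle for \emph{local confinement} rather than full separation, and then to apply the ultrafilter a second time. One takes the shells $A_n=P_n\setminus P_{n-1}$ of the product sets $P_n=G_n\cdots G_1G_0$ built from a symmetric exhaustion $G_n$; these genuinely partition $G$, and the key estimate is that for $g\in G_k$ and $n>k$ one has $gA_n\subseteq A_{n-1}\cup A_n\cup A_{n+1}$, i.e.\ a translate moves you at most one shell over. Rareness gives $U\in\ww$ meeting each shell at most once, and then --- this is the step your proposal omits --- one uses that either the union $E$ of the even-indexed shells or the union $O$ of the odd-indexed shells belongs to $\ww$, and intersects to get $V=U\cap E$ (say). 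Now if $g\in G_k$, $g\ne e$, and $x,y\in V$ with $y=gx$, $x\in A_{2m}$ with $2m>k$, then $y\in A_{2m-1}\cup A_{2m}\cup A_{2m+1}$, and the only even index among these is $2m$, where $V$ has at most the single point $x$; so $y=x$ and $g=e$, a contradiction. Hence $gV\cap V$ is finite, which is what your argument was aiming for. (Your own $\bb N$-style instincts already contain this: the paper's $\beta\bb N$ proof likewise first passes to ``every other block'' via $\sigma$ before intersecting with the rare set.) With $V$ in hand, your concluding steps --- passing to the clopen set, intersecting with the corona, and invoking the matrix-unit argument of Theorem~\ref{non-recurrent} to get $\cl A_{\ww}=\cl D\times_r G$ --- go through as you wrote them.
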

\begin{proof}  Let $e$ denote the identity of $G$ and choose finite subsets, $\{e \} = G_0 \subseteq G_1 \subseteq \ldots$ with $G_n = G_n ^{-1}$ and $G = \cup_{n=0}^{\infty} G_n.$ Set $P_n= G_n \cdots G_1 \cdot G_0$ and let $A_n = P_n \backslash P_{n-1}, n \ge 1,$ and $A_0 = \{ e \}.$ We have that each $A_n$ is finite, $A_n \cap A_m$ is empty for $n \ne m$, and $G = \cup_{n=0}^{\infty} A_n.$

If $g \in G_k$ and $n > k,$ we claim that $gA_n  \subseteq A_{n-1} \cup A_{n} \cup A_{n+1} = P_{n+1} \slash P_{n-2}.$ To see this note that $gP_n \subseteq P_{n+1}.$ If $p \in P_n$ and $gp \in P_{n-2}$, then $p = g^{-1}(gp) \in P_{n-1}$ and, hence $p \notin A_n.$  Thus, if $p \in A_n,$ then $gp \notin P_{n-2},$ and the claim follows.

Now since $\ww$ is rare, we may pick a set in $U$ in the ultrafilter $\ww$, such that   
$U \cap A_n$ has at most one element for all $n.$  Let $E = \cup A_n, n$ even and $O = \cup A_n, n$ odd. Then $E \cap O$ is empty and $E \cup O =G.$ Hence either $E$ is in the ultrafilter, or $O$ is in the ultrafilter.  

If $E$ is in the ultrafilter $\ww,$ let $V = U \cap E$ which is in the ultrafilter $\ww$.  If $g \in G_k, g \ne e$ we claim that the cardinality of $gV \cap V$ is at most $k/2$. Assuming the claim, we see that the clopen nieghborhood $\cl V$ of $\ww$ corresponding to $V$, has the property that, $$\cl W =\cl V \cap \bb G^*$$ is an open neighborhood of $\ww$ with $g \cl W \cap \cl W$ empty for all $g \ne e.$

To see the claim, write $V= \{ a_{2m}: a_{2m} \in A_{2m}, m \ge 0 \},$ so that $x \in gV \cap V$ if and only if $x = ga_{2m} = a_{2j}.$ But if $2m >k,$ then $ga_{2m} \in A_{2m-1} \cup A_{2m} \cup A_{2m+1},$ and so $j=m,$ forcing $g=e.$
Thus, $ga_{2m} = a_{2j}$ has no solutions for $2m > k$ when $g \ne e.$

The proof for the case that $O$ is in the ultrafilter $\ww,$ is identical.

\end{proof}

\begin{cor} Let $G$ be a countable discrete group. Assuming the continuum hypothesis, $G^*$ contains a dense
  set of wandering points.
\end{cor}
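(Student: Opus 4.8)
The plan is to prove that, under CH, the rare ultrafilters on $G$ form a dense subset of $G^*$; since the preceding theorem shows that every rare ultrafilter is a wandering point, the rare ultrafilters will then constitute a dense set of wandering points, which is the assertion. (If $G$ is finite then $G^* = \emptyset$ and there is nothing to prove, so assume $G$ is infinite.) Recall that the clopen sets $\overline{A}$, $A \subseteq G$, form a basis for $\BG$, that $\overline{A} \cap G^*$ is nonempty precisely when $A$ is infinite, and that in that case it consists of all free ultrafilters on $G$ containing $A$; these sets form a basis for $G^*$. Thus it suffices to show that every infinite $A \subseteq G$ is contained in some free rare ultrafilter. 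Note that rareness is a purely combinatorial property of an ultrafilter, depending only on partitions of the countable set $G$ into finite pieces, so no use is made of the group structure at this point.

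I would construct the desired ultrafilter by the usual transfinite recursion of length $\omega_1$. Using CH, which makes the continuum equal to $\aleph_1$, enumerate $\cl P(G) = \{X_\alpha : \alpha < \omega_1\}$ and the collection of all partitions of $G$ into finite pieces as $\{\cl Q_\alpha : \alpha < \omega_1\}$. I then build a $\subseteq^*$-decreasing transfinite sequence of infinite subsets $A = A_0 \supseteq^* A_1 \supseteq^* \cdots$, one for each $\alpha < \omega_1$. At a successor stage I pass from $A_\alpha$ to $A_{\alpha+1}$ in two moves: first replace $A_\alpha$ by whichever of $A_\alpha \cap X_\alpha$ and $A_\alpha \setminus X_\alpha$ is infinite (applied to singletons, this move also guarantees that the final filter is free); then, since an infinite subset of $G$ must meet infinitely many pieces of $\cl Q_\alpha$, pick one point from each such piece to obtain an infinite $A_{\alpha+1}$ meeting every piece of $\cl Q_\alpha$ in at most one point. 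At a limit stage $\lambda < \omega_1$ the tower $\{A_\alpha : \alpha < \lambda\}$ is countable and $\subseteq^*$-decreasing, hence admits a pseudo-intersection, which I take for $A_\lambda$.

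Finally, let $\ww$ be the filter on $G$ generated by $\{A_\alpha : \alpha < \omega_1\}$; it is proper since the $A_\alpha$ are infinite and $\subseteq^*$-decreasing. It is an ultrafilter because for each $X \subseteq G$ the stage $\alpha$ at which $X = X_\alpha$ forces $A_{\alpha+1} \subseteq X_\alpha$ or $A_{\alpha+1} \subseteq G \setminus X_\alpha$; it is free because every singleton is decided in favour of its complement; and it is rare because for each partition $\cl Q_\alpha$ into finite pieces the set $A_{\alpha+1} \in \ww$ is a selector for it. Since $A = A_0 \in \ww$, the point $\ww$ lies in $\overline{A} \cap G^*$, establishing density. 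There is no genuine obstacle here: the selector step is immediate from the finiteness of the pieces, the limit step is the standard diagonal construction of a pseudo-intersection of a countable tower, and the only thing that requires care is arranging the single successor step so that the resulting filter is simultaneously an ultrafilter, free, and rare — which is exactly what the two-move recipe above accomplishes.
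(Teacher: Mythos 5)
Your proof is correct, and its overall skeleton is the same as the paper's: rare ultrafilters are dense in $G^*$ under CH, and by the preceding theorem every rare ultrafilter is a wandering point of $G^*$. The difference is that the paper disposes of the density step in one line by citing Choquet \cite{C}, whereas you reprove that fact from scratch by the standard length-$\omega_1$ transfinite recursion: under CH enumerate the subsets of $G$ and the partitions of $G$ into finite pieces, build a $\subseteq^*$-decreasing tower of infinite sets below a prescribed infinite $A$, decide $X_\alpha$ and take a selector for $\mathcal{Q}_\alpha$ at successor stages, and take a pseudo-intersection at countable limit stages. Your construction is sound: properness of the generated filter follows because finite intersections along a $\subseteq^*$-decreasing tower of infinite sets are infinite, the successor step makes the filter an ultrafilter and rare, the singleton case forces freeness, and your reduction of density to the claim that every infinite $A \subseteq G$ lies in a free rare ultrafilter is exactly right, since the sets $\overline{A} \cap G^*$ with $A \subseteq G$ infinite form a basis for $G^*$. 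What your route buys is self-containedness (no appeal to \cite{C}) at the cost of length; what the paper's citation buys is brevity. In both arguments the group structure is irrelevant to the density step and enters only through the preceding theorem that rare ultrafilters are wandering.
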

\begin{proof} By \cite{C}, if we assume the continuum hypothesis, then
  the rare ultrafilters
  are dense in $G^*.$
\end{proof}

We don't know if it is necessary to assume the continuum hypothesis to
conclude that the wandering points are dense in $G^*.$

\begin{prob} Let $\ww$ be an ultrafilter on $\bb N$, then for every countable, discrete group $G$ and every labeling, $G= \{ g_n: n \in \bb N \}$, we have that $\ww$ determines a point in $G^*.$ The above result shows that when $\ww$ is rare then the point obtained in this fashion is wandering for the action of $G$ on $G^*.$ Conversely, if $\ww$ is an ultrafilter on $\bb N$ with this property, then must $\ww$ be rare?
\end{prob}

By the result of Reid\cite{R}, we know that the state given by any rare ultrafilter has a unique extension to $B(\ell^2(G)).$ This suggests the following conjecture:

\begin{conj} If $\ww \in G^*$ is wandering for the $G$ action on $G^*,$ then the state corresponding to evaluation at $\ww$ extends uniquely to a state on $B(\ell^2(G)).$ 
\end{conj}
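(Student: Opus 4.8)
The plan is to relax the hypothesis of Reid's theorem --- which yields the conclusion for \emph{rare} ultrafilters --- to the weaker property of being wandering, by working through Anderson's paving criterion. Since $\cl U(\ww)$ is an operator system containing $\cl D$, and since Anderson's paving results concern only the diagonal MASA in $B(\ell^2(G))$ and hence make no use of the group structure, ``$s_{\ww}$ extends uniquely'' is equivalent to the statement that for every self-adjoint $X \in B(\ell^2(G))$ with $E_0(X) = 0$ and every $\epsilon > 0$ there is a set $A$ in the ultrafilter $\ww$ with $\|P_A X P_A\| \le \epsilon$. So the task is to $\epsilon$-pave each such $X$ along the wandering ultrafilter.

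First I would pass to a wandering set: by hypothesis there is a clopen $\cl V \ni \ww$ with $g \cl V \cap h \cl V = \emptyset$ in $G^*$ for $g \ne h$, so that $\gamma := \cl V \cap G \in \ww$ has $\gamma \cap g\gamma$ finite for every $g \ne e$, and moreover every subset of $\gamma$ is again a wandering set. Writing the crossed-product Fourier series $X \sim \sum_{g \ne e} M_{f_g} U_g$ and using $U_g P_\gamma = P_{g\gamma} U_g$, one finds $P_\gamma X P_\gamma \sim \sum_{g \ne e} D_g U_g$ with $D_g = M_{f_g} P_{\gamma \cap g \gamma}$ a finite-rank diagonal operator; that is, compressing by $P_\gamma$ makes every diagonal of $X$ finite rank. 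Since compressions only decrease norms and $\gamma \in \ww$, it follows that $X$ is $\epsilon$-pavable along $\ww$ precisely when the structured operator $P_\gamma X P_\gamma$ is $\epsilon$-pavable along the restricted ultrafilter $\ww|_\gamma$. One would then attempt to produce the paving set $A \subseteq \gamma$ by a greedy or transfinite construction in the spirit of Reid's proof, at each stage discarding the finitely many elements of $\gamma$ that interact badly with those kept so far, and arranging the discard pattern to be subordinate to a partition of $\gamma$ along which $\ww$ is known to admit a transversal.

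The hard part --- and the reason the statement remains only a conjecture --- is exactly this final paving step. Reid's argument uses the full force of rareness: a rare ultrafilter contains a transversal of \emph{every} partition of a countable set into finite pieces, a very flexible selection principle; a merely wandering $\ww$ hands us only the single set $\gamma$. Worse, after compressing to $\gamma$ one is left with an unrestricted paving problem: for a ``maximally thin'' wandering set --- e.g.\ $\gamma = \{2^k\}$ in $\bb Z$, in which each difference $2^k - 2^\ell$ is realized at most once --- the operator $P_\gamma X P_\gamma$, read as an operator on $\ell^2(\gamma)$, ranges over \emph{all} bounded self-adjoint operators with zero diagonal, so $\epsilon$-paving it along $\ww|_\gamma$ is exactly Kadison--Singer for $\ww|_\gamma$. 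Hence a new idea is needed, and I would explore three routes: (i) show that a bounded operator all of whose $G$-diagonals are finite rank is $\epsilon$-pavable along \emph{every} ultrafilter --- this holds in simple cases such as a weighted shift but I do not expect it in full generality, and it may require first refining $\gamma$ inside $\ww$; (ii) extract dynamical structure forced by wandering beyond mere non-recurrence (the results above already give $\A = \cl D \times_r G$ and $I(\A) = B(\ell^2(G))$, but these follow from non-recurrence alone, so the necessary condition of Corollary~\ref{condition2} is here automatic and cannot be the whole story); or (iii) attack the stronger conjecture $\cl R(\psi_{\ww}) = B(\ell^2(G))$ head-on with the crossed-product and injective-envelope apparatus of the first section, using that $\psi_{\ww}$ already fixes $VN(G)$ elementwise and carries $\cl D$ onto itself, and only then trying to recover uniqueness of the state extension. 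I expect route (i), in one form or another, to be the genuine obstacle.
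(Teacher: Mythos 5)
You should be aware at the outset that the statement you were given is not proved in the paper at all: it appears only as a conjecture, supported by circumstantial evidence rather than argument. The paper's evidence is exactly what you reconstruct: Reid's theorem gives unique extension for rare ultrafilters, the paper (via Davidson's argument) shows rare ultrafilters are wandering, and wandering points, being in particular non-recurrent, satisfy $\cl A_{\ww} = \cl D \times_r G$ and $I(\cl A_{\ww}) = B(\ell^2(G))$ by Theorem~\ref{non-recurrent}, hence the necessary condition of Corollary~\ref{condition2}. So there is no proof in the paper against which to compare your attempt, and your write-up, to its credit, does not claim to supply one.

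The gap in your plan is real and is the one you name yourself: the final paving step. Your reduction is correctly set up --- the Anderson-type criterion that $s_{\ww}$ extends uniquely if and only if every self-adjoint $X$ with $E_0(X)=0$ can be $\epsilon$-paved by a single set $A \in \ww$ is the same tool the paper uses in its minimal-idempotent theorem, and your computation $P_{\gamma} M_{f_g} U_g P_{\gamma} = M_{f_g} P_{\gamma \cap g\gamma} U_g$, showing that compression to a wandering set $\gamma$ makes every $G$-diagonal finite rank, is correct. But, as your own example $\gamma = \{2^k\} \subset \bb Z$ shows, this structural gain carries no quantitative content: every zero-diagonal self-adjoint operator on $\ell^2(\gamma)$ arises as such a compression, so $\epsilon$-paving $P_{\gamma} X P_{\gamma}$ along the ultrafilter induced on $\gamma$ is the unrestricted Kadison--Singer paving problem for that ultrafilter. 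Wandering hands you the single set $\gamma$ and nothing more, whereas Reid's argument consumes the full selection principle of rareness (a transversal inside the ultrafilter for every partition into finite sets); none of your routes (i)--(iii) closes this, and route (ii) cannot, since the dynamical consequences you can extract from the paper follow already from non-recurrence. In short: your proposal is a sound framing of the problem and an accurate diagnosis of where it is hard, but it is not a proof, which is consistent with the paper leaving the statement as an open conjecture.
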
 

\begin{prop} Let $G$ be a countable, discrete group. Then
$\delta$-stable ultrafilters are non-recurrent in $\BG$.
\end{prop}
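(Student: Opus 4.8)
The plan is to combine Choquet's characterization, recalled just above, that a $\delta$-stable ultrafilter $\ww$ is precisely a P-point of the corona $G^* = \BG \setminus G$, with the fact established in the Introduction (via Veech's theorem) that every point of $\BG$ has trivial stabilizer, so that the orbit map $g \mapsto g \cdot \ww$ is injective. We may as well assume $\ww$ is free, i.e. $\ww \in G^*$, since if $\ww \in G$ it is isolated in $\BG$ and non-recurrence is immediate.

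First I would record two elementary reductions. Each homeomorphism $h_g$ carries $G$ bijectively onto $G$, hence restricts to a homeomorphism of $G^*$; since $\ww \in G^*$, the whole orbit $\{ g\cdot\ww : g \in G\}$ lies in $G^*$. And by triviality of the stabilizer, $g\cdot\ww = \ww$ forces $g=1$, so the set $D := \{ g\cdot\ww : g \in G,\ g \ne 1\}$ is a \emph{countable} subset of $G^*$ with $\ww \notin D$.

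The key step is then short: $G^*$ is compact Hausdorff, so points are closed, and therefore $G^* \setminus D = \bigcap_{g \ne 1}\bigl(G^* \setminus \{g\cdot\ww\}\bigr)$ is a $G_\delta$ subset of $G^*$ containing $\ww$. Because $\ww$ is a P-point of $G^*$, this $G_\delta$ set contains an open neighbourhood $W$ of $\ww$ in $G^*$, and by construction $g\cdot\ww \notin W$ for every $g \ne 1$. Finally, choosing an open set $U \subseteq \BG$ with $U \cap G^* = W$, we get for each $g \ne 1$ that $g\cdot\ww \in G^*$ and $g\cdot\ww \notin W = U \cap G^*$, hence $g\cdot\ww \notin U$; so $U$ is an open neighbourhood of $\ww$ in $\BG$ witnessing that $\ww$ is non-recurrent.

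I do not expect a genuine analytic obstacle here: the only thing requiring care is the bookkeeping of ambient spaces, since the P-point property is available only in the corona whereas non-recurrence is asserted in $\BG$, so one must transport the neighbourhood back and invoke the invariance of $G^*$ under the action. The substantive content is entirely in the two imported facts — Choquet's P-point description of $\delta$-stability and Veech's theorem on trivial stabilizers — and the proof amounts to observing that they fit together directly.
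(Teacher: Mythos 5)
Your proof is correct and follows essentially the same route as the paper: use trivial stabilizers (Veech) to see the orbit points $g\cdot\ww$, $g\ne 1$, are distinct from $\ww$, form the $G_\delta$ obtained by deleting these countably many points, and invoke Choquet's P-point characterization of $\delta$-stability to extract an open neighbourhood missed by the orbit. Your extra bookkeeping transporting the neighbourhood from the corona $G^*$ back to $\BG$ is a welcome clarification of a step the paper leaves implicit, but it is not a different argument.
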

\begin{proof} Let $\ww$ be a $\delta$-stable ultrafilter, we have that  $\{g \cdot \ww: g \in G \}$ is a distinct set of points.

For each $g \in G$ that is not equal to the identity, the complement of $\{ g \cdot \ww \}$ is an
open neighborhood of $\ww.$ The intersection of these sets is a
$G_{\delta}$ containing $\ww$ and hence, applying the equivalence of $\delta$-stable to P-point, contains an open neighborhood
of $\ww.$ No point on the orbit of $\ww$ returns to this open neighborhood. 
\end{proof}

Combining the above result with Theorem~\ref{non-recurrent}, we see
that $\delta$-stable ultrafilters satisfy the condition of
Corollary~\ref{condition2} that is necessary for $s_{\ww}$ to have a unique extension.

\end{document}